\def\F{\mathbb F}
\def\P{\mathbb{P}}
\newcommand{\fq}{\mathbb{F}_q}
\theoremstyle{plain}
\newtheorem{theorem}{Theorem}[section]
\newtheorem{lemma}[theorem]{Lemma}
\newtheorem{corollary}[theorem]{Corollary}
\newtheorem{question}[theorem]{Question}
\author{Amanda Knecht}
\address{Department of Mathematics and Statistics\\
Villanova University \\
Villanova, PA}
\email{Amanda.Knecht@villanova.edu}
\author{K. Reyes}
\address{Department of Mathematic\\ 
University of Michigan\\
 Ann Arbor, MI}
\email{kgre@umich.edu}
\title[Full Degree Two del Pezzo Surfaces over  $F_q$]{Full Degree Two del Pezzo Surfaces  over Finite Fields}
\begin{document}
  \maketitle
  
%\noindent To appear in \textit{Contemporary Developments in Finite Fields and Their Applications},  World Scientific Press, December 2016.  
  
  \begin{abstract} Hirschfeld classified split del Pezzo surfaces of degree at least three whose points are all contained on the lines in the surface.  We continue his work and begin the classification of  split degree two del Pezzo surfaces over finite fields whose points are all  on the fifty-six exceptional curves of the surfaces. \end{abstract}

\section{Introduction} \label{knecht:seclabel1}

A smooth two dimensional projective variety $X$ defined over a field $k$ is called a \textit{del Pezzo} surface if  its  anticanonical divisor $-\omega_X$ is ample.  The \textit{degree} $d$ of a del Pezzo surface  is  the self-intersection number  of its canonical class and is always between 1 and 9.  Over algebraically
closed fields, del Pezzo surfaces admit a description that is simple to understand and
is adequate for the purposes of this paper. \cite[IV.24.4]{MR833513} \cite[Thm 1]{MR579026}.
\begin{theorem}[Manin, Demazure]  Let $X_d$ be a del Pezzo surface of degree $d$ defined over an algebraically closed field.
\begin{itemize}
\item[(i)] $X_9$ is isomorphic to $\P^2$.
\item[(ii)] $X_8$ is isomorphic to $\P^1 \times \P^1$ or to the blow-up of $\P^2$ at one point. 
\item[(iii)] If $1\leq d \leq 7$, then $X_d$ is the blow-up of $\P^2$ at $9-d$ points, no three of which lie on a line, no six of which lie on a conic, and no eight lie on a singular cubic having one of the points as the singularity.\end{itemize}
\end{theorem}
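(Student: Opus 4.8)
The plan is to prove the statement in two stages: first, that every del Pezzo surface $X_d$ over an algebraically closed field $k$ is a blow-up of $\P^2$ or of $\P^1 \times \P^1$; and second, to pin down exactly which blow-ups occur, thereby extracting the stated general-position conditions. Throughout I would lean on one elementary but decisive observation: adjunction $C^2 = 2p_a(C) - 2 + (-\omega_X)\cdot C$ together with ampleness of $-\omega_X$ (so $(-\omega_X)\cdot C \geq 1$ for every curve) forces every irreducible curve $C \subset X_d$ to have $C^2 \geq -1$.

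\emph{Stage one.} I would first show $X_d$ is rational. Since $-\omega_X$ is ample one gets $H^0(X,\omega_X) = 0$ (because $\omega_X \cdot (-\omega_X) = -d < 0$, so $\omega_X$ is not effective) and hence $p_g = h^2(\mathcal O_X) = 0$, while $h^1(\mathcal O_X) = 0$ follows from Kodaira-type vanishing, which is available for del Pezzo surfaces in every characteristic; also $2\omega_X$ is not effective for the same degree reason, so $P_2 = 0$. Castelnuovo's rationality criterion then gives that $X_d$ is rational, and the minimal-model theory of rational surfaces presents $X_d$ as a finite sequence of point blow-ups of $\P^2$ or of a Hirzebruch surface $\F_n$. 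Now $\F_n$ with $n \geq 2$ carries a section of self-intersection $-n \leq -2$, which is impossible by the observation above, so the minimal model is $\P^2$ or $\P^1 \times \P^1 = \F_0$. Since $\P^1\times\P^1$ blown up at a point is $\P^2$ blown up at two points, for $d \leq 7$ we may always write $X_d$ as a blow-up of $\P^2$; as each blow-up decreases $K^2$ by one and $K_{\P^2}^2 = 9$, $K_{\P^1\times\P^1}^2 = 8$, the number of points is $9-d$ (resp.\ $8-d$). This settles (i), (ii), and the count in (iii), once one notes that $\P^1\times\P^1$ and the one-point blow-up of $\P^2$ are genuinely non-isomorphic.

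\emph{Stage two.} Writing $X_d \to \P^2$ as the contraction of $9-d$ points (a priori possibly infinitely near), I would again use $C^2 \geq -1$ to exclude every bad configuration: three collinear points would make the strict transform of that line have self-intersection $\leq 1 - 3 = -2$; six points on a conic would give a strict transform of self-intersection $\leq 4 - 6 = -2$; an infinitely near point would produce a $(-2)$-curve as the strict transform of an earlier exceptional divisor; and eight points on a cubic with a node at one of them would give the strict transform of that cubic self-intersection $\leq 9 - 4 - 7 = -2$. Each contradicts ampleness, so all the listed conditions hold, and all blown-up points are distinct points of $\P^2$. For the converse I would verify the Nakai--Moishezon criterion for $-\omega_X = 3H - \sum E_i$ on a blow-up $X$ at points in general position: $(-\omega_X)^2 = d \geq 1 > 0$, and for an irreducible curve $C$ with class $aH - \sum m_i E_i$ one has $a \geq 0$ and $m_i \geq 0$; when $a = 0$ necessarily $C = E_i$ and $(-\omega_X)\cdot C = 1$, and when $a \geq 1$ one combines the arithmetic-genus inequality $\sum \binom{m_i}{2} \leq \binom{a-1}{2}$ (from $p_a(C) \geq 0$) with the general-position hypotheses to conclude $(-\omega_X)\cdot C = 3a - \sum m_i > 0$.

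I expect the main obstacle to be precisely this last implication. The genus inequality by itself is not enough — for $a = 3$, $m_1 = 2$, $m_2 = \cdots = m_8 = 1$ it is satisfied yet gives $3a - \sum m_i = 0$, which is exactly the excluded singular-cubic case — so one genuinely must feed the combinatorial constraints into the estimate. The cleanest way I know is to run standard quadratic Cremona transformations centered at triples of the blown-up points, or equivalently to argue inside the root lattice $\omega_X^\perp \subset \Pic(X_d)$, in order to reduce an arbitrary effective irreducible class to one of small degree that can be checked directly. Carrying out this reduction carefully, and handling all degrees $d$ from $1$ to $7$ uniformly, is the technical heart of the argument.
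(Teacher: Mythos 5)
The paper offers no proof of this theorem: it is quoted as background with citations to Manin \cite{MR833513} and Demazure \cite{MR579026}, so there is nothing internal to compare your argument against. On its own terms your sketch is the standard and correct route, and it is essentially the one in those references: adjunction plus ampleness gives $C^2\geq -1$ for every irreducible curve, Castelnuovo's criterion gives rationality, minimal model theory reduces to $\P^2$ and $\P^1\times\P^1$, and the $(-2)$-curve computations correctly rule out collinear triples, coconic sextuples, infinitely near points, and the octuple on a nodal cubic. Two remarks. First, the step $h^1(\mathcal O_X)=0$ is the one place where ``Kodaira-type vanishing in every characteristic'' is doing more work than you acknowledge; Kodaira vanishing genuinely fails for some surfaces in characteristic $p$, and the usual fix here is a direct argument (e.g.\ via Riemann--Roch and the non-effectivity of $\omega_X$ and $2\omega_X$, or lifting/reduction arguments as in Demazure) rather than a blanket appeal. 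Second, the converse implication that you single out as the technical heart --- that a blow-up at points in general position has ample anticanonical class, via Nakai--Moishezon and Cremona reductions --- is not actually part of the statement as quoted, which asserts only that every del Pezzo surface of degree $1\leq d\leq 7$ \emph{is} such a blow-up. So the portion of your argument that you flag as incomplete is exactly the portion you do not need, and what remains is a complete and correct proof of the stated result modulo the characteristic-$p$ point above.
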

Because any four points in $\P^2$  can be moved to any four other points via an automorphism of $\P^2$, for $5\leq d \leq 7$, there is only one del Pezzo surface of that degree up to isomorphism \cite[IV.24.4.1]{MR833513}.  Thus, from a moduli theoretic
point of view, the most interesting del Pezzo surfaces are those of degree less than five.  Over arbitrary fields these del Pezzo surfaces may not be the blow-up of points in $\P^2$, so we need a simple description that does not require the field $k$ to be algebraically closed.  Such a description is provided by Koll{\'a}r \cite[III.3.5]{MR1440180}.
\begin{theorem}[Koll{\'a}r]  Let $X_d$ be a degree $d$ del Pezzo surface defined over an arbitrary field $k$.
\begin{itemize}
\item[(i)] $X_4$ is isomorphic to the intersection of two quadrics in $\P^4$.
\item[(ii)] $X_3$ is isomorphic to a cubic surface in $\P^3$.
\item[(iii)] $X_2$ is isomorphic to a hypersurface of degree four in weighted projective space $\P(2,1,1,1)$.
\item[(iv)] $X_1$ is isomorphic to a hypersurface of degree six in weighted projective space $\P(3,2,1,1)$.
\end{itemize}
\end{theorem}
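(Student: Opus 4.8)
The plan is to handle all four cases through the anticanonical ring $R(X)=\bigoplus_{n\ge 0}H^{0}\!\bigl(X,\mathcal{O}_X(-n\omega_X)\bigr)$ and the morphisms attached to the linear systems $|-n\omega_X|$ for small $n$, reading the stated models off as $\mathrm{Proj}\,R(X)$. The first step is numerical. Since $X$ is a smooth del Pezzo surface it is geometrically rational, so $H^{1}(\mathcal{O}_X)=H^{2}(\mathcal{O}_X)=0$, and for $-\omega_X$ ample one has the vanishing $H^{i}\!\bigl(X,\mathcal{O}_X(-n\omega_X)\bigr)=0$ for all $i>0$, $n\ge 0$; with Riemann--Roch this gives
\[
h^{0}\!\bigl(X,\mathcal{O}_X(-n\omega_X)\bigr)=1+\tfrac{1}{2}\,n(n+1)\,d .
\]
In particular $h^{0}(-\omega_X)=d+1$, $h^{0}(-2\omega_X)=3d+1$, $h^{0}(-3\omega_X)=6d+1$, and every $h^{0}(-n\omega_X)$ is determined. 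These numbers, and the multiplication maps between graded pieces, are insensitive to extending scalars to $\overline{k}$, where Theorem~1.1 supplies the blow-up model on which one verifies the needed vanishings, very-ampleness statements, and projective normality; since the linear systems and multiplication maps in sight are defined over $k$, the equations produced have coefficients in $k$.

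For $d=3$ and $d=4$ one uses that $-\omega_X$ is very ample, so $|-\omega_X|$ embeds $X$ as a nondegenerate surface of degree $(-\omega_X)^2=d$ in $\P^{d}$. A surface of degree three in $\P^{3}$ is cut out by a single cubic, giving (ii). For $d=4$, from the sequence $0\to\mathcal{I}_X(2)\to\mathcal{O}_{\P^{4}}(2)\to\mathcal{O}_X(2)\to 0$ and the surjectivity of $\mathrm{Sym}^{2}H^{0}(-\omega_X)\to H^{0}(-2\omega_X)$ (projective normality of the anticanonical embedding), one gets $h^{0}\!\bigl(\mathcal{I}_X(2)\bigr)=\binom{6}{2}-(3\cdot 4+1)=2$, so $X$ lies on a pencil of quadrics. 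Two independent members meet properly---a common component would force $X$ to be degenerate or of degree smaller than four---so their scheme-theoretic intersection is a complete-intersection surface of degree $4$ containing the irreducible degree-$4$ surface $X$, whence $X$ equals that intersection, proving (i).

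For $d=2$ and $d=1$ the sheaf $-\omega_X$ is not very ample, so instead I would exhibit generators and the single relation of $R(X)$ by comparing dimensions of graded pieces with counts of weighted monomials. When $d=2$, $|-\omega_X|$ is base-point free and gives a finite degree-two morphism $X\to\P^{2}$; a basis $x,y,z$ of $H^{0}(-\omega_X)$ generates $R(X)$ through weight $1$, the $7$-dimensional space $H^{0}(-2\omega_X)$ forces exactly one new generator $w$ of weight $2$, and in weight $4$ the $22$ monomials of weighted degree $4$ in $x,y,z,w$ satisfy exactly one relation since $h^{0}(-4\omega_X)=21$; finiteness of $X\to\P^{2}$ forces the $w^{2}$-coefficient to be nonzero, so after rescaling the relation reads $w^{2}+w\,q(x,y,z)=f(x,y,z)$ with $q,f$ of weighted degrees $2$ and $4$, a degree-four hypersurface in $\P(2,1,1,1)$ (in characteristic $\ne 2$ complete the square to $w^{2}=f_{4}(x,y,z)$). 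When $d=1$, $|-\omega_X|$ is a pencil with one base point while $|-2\omega_X|$ maps $X$ two-to-one onto a quadric cone in $\P^{3}$; a basis $x,y$ of $H^{0}(-\omega_X)$ needs one new weight-$2$ generator $z$ (as $h^{0}(-2\omega_X)=4$) and one new weight-$3$ generator $w$ (as $h^{0}(-3\omega_X)=7$), and the $23$ weighted-degree-$6$ monomials in $x,y,z,w$ against $h^{0}(-6\omega_X)=22$ force a single relation $w^{2}+w\,g(x,y,z)=f(x,y,z)$ of weighted degrees $3$ and $6$, i.e.\ a degree-six hypersurface in $\P(3,2,1,1)$, proving (iv).

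I expect the main obstacle to be exactly the features Theorem~1.1 avoids by working over $\overline{k}$: the cohomological vanishing $H^{i}(X,-n\omega_X)=0$, the very-ampleness of $-\omega_X$ for $d\ge 3$, the base-point-freeness of $-\omega_X$ (resp.\ $-2\omega_X$) for $d=2$ (resp.\ $d=1$), and the projective normality used in the $d=4$ count are characteristic-sensitive statements that cannot be imported from Kodaira-type vanishing in positive characteristic, so each must be established for del Pezzo surfaces specifically---cleanly done by base-changing to $\overline{k}$, invoking the blow-up description, and descending---and one must track the characteristic-$2$ phenomenon in which the degree-two covers for $d=2,1$ are genuine Artin--Schreier-type covers and the relation cannot be put in the form $w^{2}=f$. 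The remainder is bookkeeping with Riemann--Roch and monomial counts.
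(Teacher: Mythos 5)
The paper does not prove this statement; it is quoted verbatim from Koll\'ar \cite[III.3.5]{MR1440180}, so there is no in-paper argument to compare against. Your sketch is essentially the standard proof that appears in that reference: compute $h^{0}(-n\omega_X)=1+\tfrac{1}{2}n(n+1)d$ via Riemann--Roch and the vanishing of higher cohomology, then read off the models from the anticanonical ring --- very ampleness of $-\omega_X$ for $d=3,4$ with the quadric/cubic counts $h^0(\mathcal I_X(2))=15-13=2$ and $h^0(\mathcal I_X(3))=20-19=1$, and for $d=1,2$ the generator-and-single-relation bookkeeping ($22$ weighted monomials against $h^0(-4\omega_X)=21$, and $23$ against $h^0(-6\omega_X)=22$), all of which you have done correctly. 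Your closing list of what still needs independent verification (positive-characteristic vanishing, base-point-freeness and very-ampleness statements, projective normality, and the Artin--Schreier subtlety in characteristic $2$) accurately identifies the real content of Koll\'ar's proof; note that the present paper sidesteps the characteristic-$2$ issue entirely by restricting to odd $q$, where the relation can indeed be put in the form $w^2=f_4(x,y,z)$ as used in Section~\ref{knecht:seclabel2}.
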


Del Pezzo surfaces contain a finite number of surprisingly rigid curves that encode much of their geometry. They are called  \textit{exceptional curves} and are defined as  irreducible genus zero curves with self-intersection $-1$, and they are the only irreducible curves on $X_d$ with negative self-intersection.
  A surface $X_d$ is called \textit{split} over the field $k$ if it can be realized as the blow-up of $\P^2$ at $9-d$ points.   Split surfaces are of interest to us because when a surface $X_d$ is split, all of its  exceptional curves are defined over $k$.     The exceptional curves on $X_d$  can easily be described when $2\leq d\leq 7$.  
\begin{theorem}[Manin]
Let $f: X_d \rightarrow \P^2$ be the blow-up of the plane at the points $x_1, \ldots, x_{9-d}$ with $2\leq d \leq 7$ and let $L \subset X_d$ be an exceptional curve.   The image $f(L)$ in $\P^2$ is one of the following:\begin{itemize}
\item[(a)]  one of the points $x_i$;
\item[(b)] a line passing through two of the points $x_i$;
\item[(c)] a conic passing through five of the points $x_i$;
\item[(d)] a cubic passing through seven of the points $x_i$ such that one of them is a double point.
\end{itemize} 
The number of exceptional curves on $X_d$ is given in the following table:
\begin{center}
  \begin{tabular}{l| l l l l l l}
     \hline
    degree $d$& 7 & 6& 5& 4&3&2\\ \hline
    number of exceptional curves & 3 & 6 & 10& 16& 27&56 \\
    \hline
  \end{tabular}
\end{center}
\end{theorem}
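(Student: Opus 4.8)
The plan is to work entirely inside the Picard lattice of $X_d$. Writing $f\colon X_d\to\P^2$ as the blow-up at $x_1,\dots,x_{9-d}$, the group $\Pic(X_d)$ is free on the pullback $\ell$ of a line together with the exceptional divisor classes $e_1,\dots,e_{9-d}$, with intersection pairing $\ell^2=1$, $e_i^2=-1$, $\ell\cdot e_i=0$, $e_i\cdot e_j=0$ ($i\ne j$), and canonical class $\omega_{X_d}=-3\ell+\sum_i e_i$. If $L$ is an exceptional curve with class $D=a\ell-\sum_i b_i e_i$, then $D^2=-1$ by definition, and since $L$ has arithmetic genus $0$ the adjunction formula $2p_a(L)-2=D^2+D\cdot\omega_{X_d}$ forces $D\cdot\omega_{X_d}=-1$. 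Written out in coordinates these two conditions become the Diophantine system
\begin{equation}\label{eq:dioph}
a^2-\sum_i b_i^2=-1,\qquad 3a-\sum_i b_i=1,
\end{equation}
so the problem reduces to classifying the solutions that can actually occur as classes of irreducible curves.

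First I would bound the solutions. Since $\ell$ is nef and $L$ is effective, $a=D\cdot\ell\ge 0$, and for each $i$ either $L=e_i$ or $L$ and $e_i$ are distinct irreducible curves, so $b_i=D\cdot e_i\ge 0$; moreover $a=0$ forces $D=e_i$ for some $i$. For $a\ge 1$, Cauchy--Schwarz gives $(3a-1)^2=\bigl(\sum_i b_i\bigr)^2\le (9-d)\sum_i b_i^2=(9-d)(a^2+1)$, and for every $d\ge 2$ this forces $a\le 3$. Solving \eqref{eq:dioph} in the three remaining cases is then a finite check: $a=1$ gives exactly two of the $b_i$ equal to $1$; $a=2$ gives exactly five equal to $1$; and $a=3$ gives exactly one $b_i$ equal to $2$ and six more equal to $1$. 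Pushing these classes forward by $f$ produces precisely the point, line, conic, and nodal-cubic descriptions (a)--(d).

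For the reverse direction I would realize each admissible class by an honest smooth rational $(-1)$-curve, and this is where the general position of the $x_i$ is used. The exceptional divisors give type (a); the strict transform of the line through $x_i,x_j$ gives type (b), since no three points are collinear; the strict transform of the unique conic through five of the $x_i$ gives type (c), this conic being smooth and missing the remaining points by genericity; and the strict transform of the (generically unique) plane cubic with a node at $x_{i_0}$ through six further points gives type (d), the normalization of a nodal cubic being $\P^1$. The hypotheses that no six points lie on a conic and no eight on a singular cubic are exactly what guarantees these strict transforms have the stated classes and self-intersection $-1$. Since two distinct irreducible curves meet non-negatively while $D^2=-1<0$, an exceptional curve is determined by its class, so the exceptional curves on $X_d$ are in bijection with the admissible classes; counting $9-d$ of type (a), $\binom{9-d}{2}$ of type (b), $\binom{9-d}{5}$ of type (c), and $(9-d)\binom{8-d}{6}$ of type (d), and evaluating at $d=7,6,5,4,3,2$, reproduces the table $3,6,10,16,27,56$.

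I expect the existence half to be the real obstacle: one must check that the linear system of conics through five of the points, and that of plane cubics singular at one of the points and passing through six others, genuinely contain irreducible curves with exactly the prescribed singularities, so that their strict transforms are smooth and rational of the correct numerical class. This is precisely the content of the ``no six on a conic, no eight on a singular cubic'' conditions in the Manin--Demazure description, and it must be extracted from the general-position hypothesis rather than taken for granted. The boundedness step needs only the minor additional remark that the $b_i$ are nonnegative, which follows from $D\cdot e_i\ge 0$ for $L\ne e_i$.
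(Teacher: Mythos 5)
The paper does not prove this theorem; it is quoted from Manin's \emph{Cubic forms} and used as a black box. Your argument is the standard lattice-theoretic proof of that result: adjunction plus $D^2=-1$ reduces to the Diophantine system, Cauchy--Schwarz bounds $a\le 3$, the finite check yields types (a)--(d), general position supplies irreducible representatives, and negativity of $D^2$ gives uniqueness within a class, after which the binomial count reproduces the table. This is correct and is essentially the proof the paper is implicitly relying on via the citation, so there is nothing to flag.
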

For example, the 56 exceptional curves on degree 2 del Pezzo surfaces are mapped by $f: X_2 \rightarrow \P^2$ to:
\begin{enumerate}
\item[(i)] 7 points $ x_1, x_2, \ldots, x_7$ that are blown up
\item[(ii)] $21$ lines  passing through 2 of the points $x_i$ and $x_j$
\item[(iii)]  $21$ conics  passing through 5 of the points 
\item[(iv)] 7 singular cubics passing through all 7 points and having a double point at one of them.
\end{enumerate}

For the rest of the paper,   $X_d$ denotes a split degree $d$ del Pezzo surface defined over a finite field $\fq$, and  $L_{d,q}$ will denote the elements of $X_d(\fq)$ contained on  the exceptional curves of $X_d$.   Because the intersections of the exceptional curves on high degree del Pezzo surfaces are simple, an easy counting argument lets us fill in the following table when $4\leq d \leq 7$:
\begin{center}
  \begin{tabular}{| l | l| l| l| l | l| l| l| }
     \hline
    degree $d$& 7 & 6& 5& 4&3&2\\ \hline
    number of exceptional curves & 3 & 6 & 10& 16& 27&56 \\ \hline
    $L_{d,q}$ & $3q+1$ & $6q$ & $10q-5$ & 1$6q-24$  & * & * \\
    \hline
  \end{tabular}
\end{center}
There do not exist formulas depending only on $d$ and $q$  for degree 3 and 2 del Pezzo surfaces, because the intersections of the exceptional curves are  more complicated.  For cubic surfaces the exceptional curves can intersect three at a time in places called \emph{Eckardt points}.  If we let $e$ denote the number of Eckardt points on a cubic surface $X_3$ then Hirschfeld found that $L_{3,q}= 27(q-4)+e$ \cite{MR627498}.
A classical result by Andr\'e Weil gives us the number of $\fq$-rational points on split del Pezzo surfaces \cite{MR0092196}:
$$|X_d(\F_q)| = q^2 +(10-d)q+ 1.$$
Hirschfeld defined \emph{full} del Pezzo surfaces as split surfaces whose $\fq$-rational points are all contained on the exceptional curves, i.e. $|X_d(\F_q)| =  L_{d,q}.$  Combining the table above with Weil's formula leads to the following table:

\begin{center}
  \begin{tabular}{|c| c | c  | c  | }
  \hline
  degree $ d$ & $|X_d(\F_q)|$ &   $L_{d,q}$  & $|X_d(\F_q)| -  L_{d,q}.$ \\
  \hline
  7 & $q^2+3q+1$ & $3q+1$ &$ q^2$\\ \hline
  6&   $q^2+4q+1$ & $6q$ &$ (q-1)^2$ \\ \hline
  5&   $q^2+5q+1$ & $10q-5$ &$ (q-2)(q-3)$ \\ \hline
   4&   $q^2+6q+1$ & $16q-24$ &$ (q-5)^2$ \\ \hline
   3& $q^2+7q+1$ & $27(q-4)+e$ &$(q-10)^2 +9-e$\\ \hline
  \end{tabular}
\end{center}
Using the table above and an in depth analysis of  cubic surfaces, Hirschfeld classified the full del Pezzo surfaces of degree nine through three \cite{MR831145}.  
\begin{theorem}[Hirschfeld]  Let $X_d$ be a split del Pezzo surface of degree $d$ defined over a finite field $\fq$.
\begin{itemize}
\item[(i)] $X_d$ is never full when $d\in \{ 6, 7, 8, 9\}$.
\item[(ii)] $X_5$ is full if and only if $q= 2$ or $3$.
\item[(iii)] $X_4$ is full if and only if $q=5$.
\item[(iv)] Full cubic surfaces exist only over the fields $\F_4, \F_7, \F_8, \F_9, \F_{11}, 
\F_{13}, $ and $\F_{16}$.
\end{itemize}
\end{theorem}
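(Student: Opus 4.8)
The plan is to read (i)--(iii) off the tables above and to convert (iv) into a classification of cubic surfaces by their number of Eckardt points. For (i)--(iii) the last column of the second table records $|X_d(\F_q)|-L_{d,q}$, and $X_d$ is full exactly when this vanishes: it equals $q^2$ for $d=7$ and $(q-1)^2$ for $d=6$ (and it is positive for $d=8,9$ as well, those split surfaces carrying at most one exceptional curve), all positive for every prime power; it equals $(q-2)(q-3)$ for $d=5$, which vanishes only for $q\in\{2,3\}$; and it equals $(q-5)^2$ for $d=4$, which vanishes only for $q=5$. To license the tabulated values of $L_{d,q}$ one first checks, for $4\le d\le 7$, that the configuration of exceptional curves is rigid, meaning no three are concurrent. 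Using Manin's description of the exceptional curves via their images (a)--(d) in $\P^2$, this is an elementary incidence computation: for $d=4$, say, one checks that the point where two of the sixteen curves meet cannot lie on a third, using only that no three of the five blown-up points are collinear. Rigidity gives $L_{d,q}=(\#\,\text{curves})(q+1)-(\#\,\text{intersection points})$, which is the tabulated value, so (i)--(iii) follow.

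For (iv) we use the formula $L_{3,q}=27(q-4)+e$, where $e$ is the number of $\F_q$-rational Eckardt points of $X_3$. With Weil's count this says $X_3$ is full over $\F_q$ if and only if
\[ e=(q-10)^2+9 . \]
So the question becomes: for which prime powers $q$ does there exist a split smooth cubic surface over $\F_q$ with exactly $(q-10)^2+9$ Eckardt points, all $\F_q$-rational? The key input is the bound on the number of Eckardt points of a smooth cubic surface over an algebraically closed field — at most $18$ in characteristic $\ne 2$ and at most $45$ in characteristic $2$, with equality attained by the Fermat cubic. Since $\F_q$-rational Eckardt points are a subset of the geometric ones, $e\le 45$, which forces $(q-10)^2\le 36$, i.e. $q\in\{4,5,7,8,9,11,13,16\}$; and in characteristic $5$ one has the sharper $e\le 18<34=(5-10)^2+9$, so $q=5$ drops out.

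It remains to exhibit, for each $q\in\{4,7,8,9,11,13,16\}$, a split cubic surface over $\F_q$ with exactly $(q-10)^2+9$ rational Eckardt points. For $q=7$ and $q=13$ the Fermat cubic $x_0^3+x_1^3+x_2^3+x_3^3=0$ is smooth (characteristic $\ne 3$) and split (as $3\mid q-1$); its eighteen Eckardt points — those with two vanishing coordinates and the remaining two in a fixed ratio $\zeta$ with $\zeta^3=-1$ — are all $\F_q$-rational, and $(q-10)^2+9=18$. For $q=4$ and $q=16$ the Fermat cubic is again smooth and split, but now it is the Hermitian surface $H(3,4)$, which as a generalized quadrangle $\mathrm{GQ}(4,2)$ has each of its $45$ points lying on exactly three of its $27$ lines; so all $45$ points are Eckardt points, all defined over $\F_4\subseteq\F_q$, and $(q-10)^2+9=45$. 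For $q=11$ the Clebsch diagonal cubic is smooth and split over $\F_{11}$ (since $\sqrt 5\in\F_{11}$), its ten Eckardt points are rational, and $(11-10)^2+9=10$. The two delicate cases are $q=9$ — characteristic $3$, where the Clebsch and Fermat equations both degenerate, so one must produce a different smooth cubic over $\F_9$ with exactly ten rational Eckardt points — and $q=8$, where the Fermat cubic is not split and in fact has no $\F_8$-rational Eckardt points, so one must build by hand a cubic over $\F_8$ carrying exactly thirteen of them.

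The main obstacle is precisely this last constructive step for $q=8$ and $q=9$, together with establishing the characteristic-$2$ bound $e\le 45$ and understanding the extremal $45$-Eckardt-point configuration. For $q=9$ one would look among cubic surfaces in characteristic $3$ with large automorphism group and pin down one whose general tangent plane meets it in a triangle at exactly ten rational points; for $q=8$ one searches — as in Hirschfeld's original treatment, with computer assistance if needed — for an explicit equation over $\F_8$, checks smoothness and splitness, and counts Eckardt points by intersecting with tangent planes. A point requiring care throughout is Galois-rationality: the combinatorial bounds control geometric Eckardt points, whereas fullness needs them over $\F_q$ itself, which is exactly why the Fermat cubic settles $q=7,13$ while the $\F_8$ case genuinely demands a new surface even though $13\le 45$.
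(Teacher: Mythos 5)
The paper contains no proof of this theorem: it is quoted from Hirschfeld's article [Hir82] and used purely as an input, so there is no in-paper argument to compare against. Judged on its own terms, your treatment of (i)--(iii) is sound and is precisely the ``easy counting argument'' the paper alludes to just before its table: for $4\le d\le 7$ no point lies on three exceptional curves, so $L_{d,q}$ is the inclusion--exclusion count $(\#\text{curves})(q+1)-(\#\text{intersection points})$, and the resulting differences $q^2$, $(q-1)^2$, $(q-2)(q-3)$, $(q-5)^2$ vanish only at the stated prime powers, while $d=8,9$ are trivial because those surfaces carry at most one exceptional curve.

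For (iv), however, there are genuine gaps. First, the necessity direction rests entirely on the bounds $e\le 18$ in characteristic $\ne 2$ and $e\le 45$ in characteristic $2$ for the number of geometric Eckardt points; you assert these without proof, and they are classical but nontrivial facts that need either a citation or an argument (in particular you must confirm that $18$ remains the correct bound in characteristic $3$, which is exactly the case governing $q=9$). Second, and more seriously, Hirschfeld's statement is a classification, and your sufficiency direction is incomplete: as you yourself flag, you have no construction for $q=8$ (a split smooth cubic over $\F_8$ with exactly $13$ rational Eckardt points) or for $q=9$ (one with exactly $10$), so what you have actually established is only that full cubics can exist at most over the listed fields, not that they do exist over each of them. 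The cases you do handle are correct: the Fermat cubic over $\F_4$ and $\F_{16}$ (all $45$ points Eckardt), the Fermat cubic over $\F_7$ and $\F_{13}$ (all $18$ Eckardt points rational since $3\mid q-1$), and the Clebsch cubic over $\F_{11}$. If all you need is the restrictive direction --- which is how the present paper uses the theorem --- your argument is fine modulo the Eckardt-point bounds; for the full statement you must either exhibit the missing surfaces over $\F_8$ and $\F_9$ or fall back on Hirschfeld's explicit constructions.
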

This paper gives a partial classification of full degree two del Pezzo surfaces over fields of odd characteristic.  
\begin{theorem}
 Let $X_2$ be a full degree two del Pezzo surface over $\fq$ where $q$ is odd.   Then $9 \leq q \leq 37$ and up to isomorphism:
\begin{itemize}
\item[(i)] there is a unique $X_2$ defined over $\F_9$ and $\F_{11}$;
\item[(ii)] there are two $X_2$'s defined over $\F_{13}$;
\item[(iii)] there are six $X_2$'s defined over $\F_{17}$;
\item[(iv)] there are five $X_2$'s defined over $\F_{19}$;
\item[(v)] there are at least two  $X_2$'s defined over $\F_{23}$.
\end{itemize}
\end{theorem}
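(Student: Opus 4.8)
The plan is to mimic Hirschfeld's strategy for cubic surfaces, adapted to the degree two case. First I would compute $L_{2,q}$ as precisely as possible. The $56$ exceptional curves on $X_2$ form a configuration whose combinatorics are governed by the $E_7$ root system: two exceptional curves meet in $0$, $1$, $2$, or $3$ points (the last case occurring when three curves are concurrent, a "generalized Eckardt point"). Writing $t_3$ for the number of points where (at least) three exceptional curves meet and $t_2$ for the number where exactly two meet, an inclusion–exclusion count on the $56$ curves (each a $\P^1$ with $q+1$ points) gives $L_{2,q} = 56(q+1) - (\text{number of incidences among the curves counted with multiplicity})$, which reduces to an expression of the form $L_{2,q} = 56q - c + (\text{correction terms in } t_2, t_3)$ for an explicit constant $c$ coming from the generic $E_7$ intersection numbers. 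Combining this with Weil's formula $|X_2(\fq)| = q^2 + 8q + 1$, the fullness condition $|X_2(\fq)| = L_{2,q}$ becomes a quadratic inequality in $q$ once one bounds $t_2, t_3$ by their maximum possible values over all configurations; this is what forces $q \le 37$, and a matching lower bound (the configuration can cover at most so many points) forces $q \ge 9$.

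Next, for each $q \in \{9, 11, 13, 17, 19, 23, 25, 27, 29, 31, 37\}$ I would determine which split $X_2/\fq$ are actually full. Here I would use the representation of $X_2$ as a degree four hypersurface $w^2 = f(x,y,z)$ in $\P(2,1,1,1)$ with $f$ a smooth plane quartic, equivalently as the double cover of $\P^2$ branched over a smooth quartic $C$, with the $56$ exceptional curves lying over the $28$ bitangents of $C$. Fullness then translates into: every $\fq$-point of $\P^2$ either lies on a bitangent of $C$, or has the property that $f$ is a non-square there (so the fibre of the double cover contributes no rational points). I would enumerate smooth plane quartics over each such $\fq$ up to $\mathrm{PGL}_3(\fq)$-equivalence — a finite computation — discard those whose bitangents are not all rational (non-split), and for the remainder check the fullness condition directly, counting orbits to get the numbers $1,1,2,6,5,\ge 2$ claimed. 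The "at least two" for $\F_{23}$ signals that this enumeration was completed exhaustively only for the smaller fields.

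The main obstacle is the enumeration over $\F_{17}$ and $\F_{19}$: the space of plane quartics has projective dimension $14$, so a naive search is infeasible, and one must cut it down using the bitangent configuration. The key reduction is that a split $X_2$ is determined by its $56$ exceptional curves, whose images in $\P^2$ are the $28$ bitangents; a classical fact (the Aronhold construction) is that a smooth quartic is recovered from any suitable set of seven of its bitangents, so one can instead enumerate configurations of seven lines in general position over $\fq$ — a much smaller search — and for each reconstruct the quartic and test fullness. Making this reconstruction explicit and field-agnostic, and verifying that the resulting count of $\mathrm{PGL}_3(\fq)$-orbits is complete, is the delicate part; I expect the bulk of the proof to be this case analysis, with the inequality $9 \le q \le 37$ being the comparatively easy a priori bound that makes the search finite.
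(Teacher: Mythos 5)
Your overall strategy --- derive a formula for $L_{2,q}$, combine it with Weil's count $|X_2(\fq)|=q^2+8q+1$ to force $9\le q\le 37$, then enumerate split surfaces over each remaining field and test fullness --- is the same high-level plan as the paper's, and your reformulation of fullness (every $\fq$-point of $\P^2$ off the bitangents must make $f$ a nonsquare) is correct. The counting step is where you and the paper essentially coincide, except that the paper works downstairs with the $28$ bitangents rather than upstairs with the $56$ exceptional curves: for each bitangent $L_i$ it records how many of its $q+1$ points lie on exactly $4$, $3$, $2$, or $1$ bitangents or on the branch quartic $Q$, and from the relation $27=3h_i+2e_i+f_i$ derives $L_{2,q}=6h+2e+28(2q-25)-|Q(\fq)|$, whence the fullness condition $(q-24)^2+|Q(\fq)|=6h+2e-125$ and the bound $q\le 37$ (the lower bound $q\ge 9$ comes, as you suggest, from each bitangent needing at least $9$ points to accommodate its $27$ intersections when at most $4$ bitangents are concurrent). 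One correction to your setup: two distinct exceptional curves on $X_2$ meet in at most $2$ points (the value $2$ occurring for the pair lying over a common bitangent); the relevant degeneration is that up to four bitangents can pass through a single point of $\P^2$, not that a pair of curves has intersection number $3$. Your inclusion--exclusion would still go through once this is repaired.

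Where you genuinely diverge is the enumeration. You propose an independent exhaustive search over smooth plane quartics with all $28$ bitangents rational, cut down via Aronhold heptads of lines; the paper does not do this. Instead it (a) shows that over $\F_9$, $\F_{11}$, $\F_{13}$ every split $X_2$ must contain a generalized Eckardt point, (b) invokes Kuwata's theorem that such quartics admit an involution and searches Kuwata's explicit three-parameter family of quartics whose $28$ bitangents are all rational, and (c) certifies completeness by comparing against Kaplan's prior classification of all split degree two del Pezzo surfaces over $\fq$ for $q\le 19$ (and his partial results for $q=23$, which is exactly why that count is only ``at least two''). Your route, if executed, would be self-contained, but its hard content --- that not every set of seven bitangents is an Aronhold set, that the $\mathrm{PGL}_3(\fq)$-orbit enumeration is complete, and that no involution-free full surface is missed --- is precisely what you defer, just as the paper outsources it to Kaplan. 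As written, the proposal is a viable program rather than a proof: the specific counts $1,1,2,6,5,\ge 2$ do not follow from anything you actually establish.
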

Theoretically there can exist full degree two del Pezzo surfaces over $\F_{25}, \F_{27}, \F_{29},  \F_{31},$ and $\F_{37}$, but we have not found them yet.\\\\

\noindent \textbf{Acknowledgments:}  The authors are greatly indebted to the referee for writing a MAGMA program that helped us analyze the bitangents to the curves given by Kaplan.  We would also  like to thank Robert Lazarsfeld for introducing them at the University of Michigan and  Zachary Scherr for discussing this problem with us.

\section{Degree Two del Pezzo Surfaces}\label{knecht:seclabel2}
From now on we will work over a finite field of odd characteristic, $\fq$.  As stated above,  a split degree two del Pezzo surface $X_2$ can be thought of as the blow-up of the plane at seven points in general position or a degree four hypersurface in weighted projective space $\P(2,1,1,1)$.  The weighted projective space we are considering has variables $w,x,y,z$ where the $w$ is given weight two and the other variables are weight one.   This means for our surface is that we can write $X_2$ as the zero set of an equation of the form $$w^2= f_4(x,y,z)$$ where $f_4$ is a homogeneous degree four polynomial in three variables with coefficients in $\fq$.

This representation of $X_2$ yields a third way of thinking of degree two del Pezzo surfaces.  The polynomial $f_4(x,y,z)$ defines a quartic curve $Q$ in the plane $\P^2,$ and $X_2$ is a double cover of the plane branched over $Q$.  Thus, there is a surjective map $\varphi:X_2 \rightarrow \P^2$ such that the preimage of a point in the plane is two points in $X_2$ unless the point is contained in the quartic $Q$, in which case the preimage is one point.  The map  $\varphi$ is the rational map given by the anticanonical
linear system, $|-K_{X_d}|$, and happens to be a morphism in this case \cite[III.3.5]{MR1440180}.   If $L$ is a line in the plane, then by Bezout's Theorem $L$ intersects $Q$ at four points counting multiplicity and we write $L\cap Q= P_1+P_2+P_3+P_4$.  The line $L$ is \textit{tangent} to $Q$ at a point $P_1$ if we can say $L \cap Q = 2P_1 + P_2+ P_3$.  A \textit{bitangent} is a line $L$ such that $L \cap Q = 2P_1 + 2P_2$ or $L \cap Q = 4P_1 $.  When $L$ and $Q$  intersect only at one point $P_1$ we call that point a \textit{hyperflex}.  Over a field of characteristic greater than three, the number of hyperflexes is bounded by 12 \cite{MR812443}.  
There are only two quartic curves with exactly 12 hyperflexes, the Fermat quartic defined by  $x^4+y^4+z^4=0$ and the curve defined by $x^4+y^4+z^4 +3(x^2y^2 + x^2z^2 +y^2z^2)=0$  \cite{MR555703}.
Over $\F_9$, the Fermat quartic has 28 hyperflexes.

The fifty-six exceptional curves on $X_2$ come from the twenty-eight bitangents to the quartic curve $Q$.  
Each bitangent to $Q$ lifts to a pair of exceptional curves in $X_2$ that intersect transversely at two points of $Q$ or, in the case of a hyperflex,  are tangent to each other as pictured below.
\begin{center}
\includegraphics[scale=1]{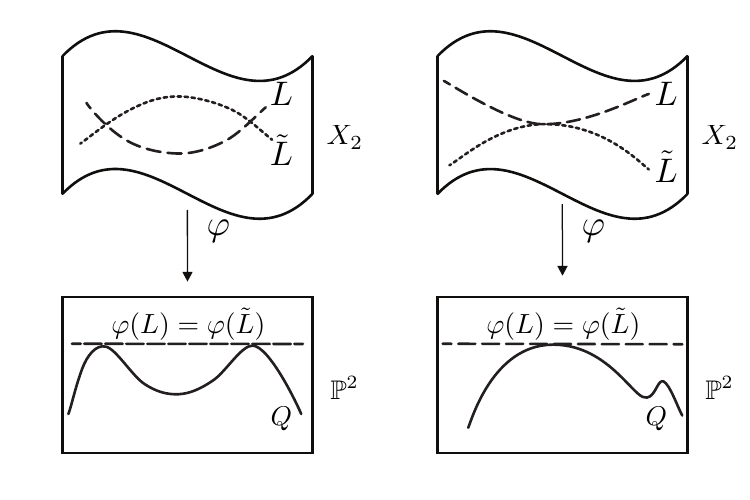}
\end{center}

Since the bitangents to $Q$ are lying in the plane $\P^2$, each bitangent is guaranteed to intersect each of the other 27 bitangents once.    Similarly to the case of cubic surfaces, now up to four bitangents can intersect at once \cite[Proof of Lemma 4.1]{MR2579393}.  This corresponds to four of the exceptional curves in $X_2$ intersecting at one point.  We call such a point in $X_2$ a \emph{generalized Eckardt point}.  As with cubic surfaces, a point on $X_2$ where three exceptional curves meet is called an Eckardt point.  In order to find a formula  for $L_{2,q}$ similar to Hirschfeld's formula for $L_{3,q}$ we need to establish a notation for the points on the bitangent lines to $Q$.

Let $L_i$ be a bitangent  to $Q$ for $1\leq i \leq 28$ and let:\\
$h_i$ = number of points on $L_i$ where exactly 4 bitangents meet,\\
$e_i$ = number of points on $L_i$ where exactly 3 bitangents meet,\\
$f_i$ = number of points on $L_i$ where  exactly 2 bitangents meet,\\
$g_i$ = number of points on $L_i$ that are not on any other bitangents,\\
$c_i$= number of times $L_i$ intersects $Q$ over $\F_q$.  Thus $c_i \in  \{0,1,2\}$.\\
Then,
\begin{equation}\label{line} |L_i(\fq)|= q+1= h_i+e_i+f_i+g_i+c_i \end{equation}  and 
\begin{equation}\label{int}27= 3h_i+2e_i+f_i.\end{equation}

Now let 
$h= \frac{1}{4} \sum h_i,\; e= \frac{1}{3} \sum e_i,\; f= \frac{1}{2} \sum f_i,\; g = \sum g_i,\; c= \sum c_i.$
Since each $L_i$ will lift to two exceptional curves in $X_2$ that intersect only over  $\fq$ points of $Q$,
 we get the formula
\begin{equation}\label{pts}L_{2,q}= 2h+2e+2f+2g+c.\end{equation}  Notice that we do not double count the $c$ points because those are places where the map $\varphi$ is one-to-one. Also notice that in order for $X_2$ to have a chance at fullness, we need $c= |Q(\F_q)|\leq 56$.
 
 We can use equations  (\ref{line})  and  (\ref{int})  to turn (\ref{pts}) into a formula which depends on only $q$, the number of points on the quartic $Q$,  and  the number of generalized and classical Eckardt points.  To do this we first manipulate  (\ref{int}) to get the equality $f_i=27-3h_i-2e_i$ which leads to \begin{equation}\label{f}2f=28(27)-12h-6e.  \end{equation}  Then combining $g_i=q+1-h_i-e_i-f_i-c_i$ from (\ref{line})  with  (\ref{f}) we get the following formula for $g$,
\begin{equation}\label{g} 2g= 28(2q-52)+16h+6e- 2|Q(\F_q)|.
\end{equation}
Using equations (\ref{f}) and (\ref{g}) to eliminate the $f$ and $g$ in (\ref{full}) yields the desired formula
 $$L_{2,q}= 6h+2e+28(2q-25)-|Q(\fq)|.$$  
 
 In order for $X_2$ to be full, the equation $$(q^2+8q+1)-(6h+2e+28(2q-25)-|Q(\fq)|)=0$$ must be satisfied.  Or we can solve 
 \begin{equation}\label{full}(q-24)^2+|Q(\fq)| =6h+2e-125.\end{equation}

\begin{lemma}  Full degree two del Pezzo surfaces can only exist over $\F_q$  when $9 \leq q \leq 37$.
\end{lemma}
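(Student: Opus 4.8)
The plan is to read the required bound straight off equation (\ref{full}), whose right-hand side $6h+2e-125$ admits an upper bound that does not depend on $q$.

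\emph{Bounding $6h+2e$.} Summing the incidence relation (\ref{int}) over all $28$ bitangents and using the definitions $h=\tfrac14\sum h_i$, $e=\tfrac13\sum e_i$, $f=\tfrac12\sum f_i$ yields
$$12h+6e+2f=28\cdot 27=756.$$
Each of $h,e,f$ is, up to a positive scalar, a sum of point counts and hence nonnegative; in particular $f\ge 0$ gives $12h+6e\le 756$, and then, using $e\ge 0$,
$$6h+2e\ \le\ 6h+3e\ =\ \tfrac12(12h+6e)\ \le\ 378.$$

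\emph{Applying (\ref{full}).} Since $|Q(\fq)|\ge 0$,
$$(q-24)^2\ \le\ (q-24)^2+|Q(\fq)|\ =\ 6h+2e-125\ \le\ 378-125\ =\ 253,$$
so $(q-24)^2\le 253<16^2$, whence $|q-24|\le 15$ and $9\le q\le 39$. Since $q$ is an odd prime power, the values $33,35,38,39$ are excluded, leaving $q\in\{9,11,13,17,19,23,25,27,29,31,37\}$; in particular $9\le q\le 37$.

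I do not anticipate a real obstacle here: all the substance is already contained in the formula (\ref{full}), and the argument only adds the observation that its right-hand side is bounded above independently of $q$, together with the nonnegativity of the counts $h_i,e_i,f_i$ and $|Q(\fq)|$. The one place needing a remark is the passage from $q\le 39$ to $q\le 37$, which uses that $q$ is an odd prime power. If one prefers to avoid that step, one can instead invoke the Weil bound $|Q(\fq)|\ge q+1-6\sqrt q$ for the smooth genus-three branch quartic $Q$, so that $(q-24)^2+q+1-6\sqrt q\le 253$; this inequality already fails for every prime power $q\ge 41$, giving $q\le 37$ directly.
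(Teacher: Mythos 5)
Your proof is correct and follows essentially the same strategy as the paper: both arguments rest on equation (\ref{full}) together with the bound $6h+2e\le 378$, giving $(q-24)^2\le 253$ and hence $9\le q\le 37$ for odd prime powers $q$. The differences are minor but worth recording. For the upper bound on $6h+2e$, the paper bounds $h\le 63$ and $e\le 121$ separately and then asserts that the maximum of $6h+2e-125$ is $253$, attained at $h=63$, $e=0$; taken literally the separate bounds would only give $6h+2e\le 620$, so the assertion really depends on the joint constraint, which your derivation via the summed incidence relation $12h+6e+2f=756$ makes explicit and justifies cleanly. For the lower bound, the paper gives a separate geometric argument (each bitangent meets the other $27$, at most three at a time at any point, so it carries at least $9$ rational points and $q\ge 9$), whereas you read $q\ge 24-15=9$ directly off the same inequality $(q-24)^2\le 253$; both are valid. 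Your closing use of the Weil bound for the genus-three branch quartic parallels the paper's own remark about the case $q=37$.
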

\begin{proof}  In order to find a lower bound for the size of the field, we must find the smallest field in which a degree two del Pezzo can split.  We discover this bound by looking at how the bitangents of a quartic plane curve intersect.  Since each of the bitangents must intersect the 27 others and at most four can intersect simultaneously, each bitangent must contain at least 9 points.  The number of points on a line defined over $\fq$ is $q+1$, hence $q\geq 9.$

The upper bound is found by analyzing equation (\ref{full}).  There are at most 126 generalized Eckardt points on a degree two del Pezzo surface since each bitangent line contains at most $9$ of them.   Hence, $2h \leq \frac{28*9}{2}=126, h\leq 63.$  There are also at most 242 Eckardt points on a degree 2 del Pezzo surface since each bitangent can contain at most 13 Eckardt points and not all 28 of them can.  Even so, assuming all 28 lines contain 13 Eckardt points yields $e\leq 121$.
Thus, we can see that the maximum value for the right hand side of  (\ref{full}) is 253 when $h=63, e=0$.  In order to maximize the $q$ on the left hand side of (\ref{full}), we can assume that the quartic $Q$ does not have any $\fq$-rational points.  Thus we see that in order for a degree two del Pezzo surface defined over $\fq$ to have the possibility of being full, we need $(q-24)^2 \leq 253$ or $q\leq 37$.  When $q=37$, the Weil bounds tell us that $Q$ will contain at least 2 and at most 74 rational points, so the case $h=63, e=0, |Q(\fq)|=0$ does not apply.  In this case (\ref{full}) becomes $296\leq 6h+2e \leq 350$.   One solution to this equality is $h=21, e=85$, which we have not yet ruled out as a possibility.

%NOTE: q=23,25 yields $1+|Q(\fq)| =6h+2e-125$
%if q=23, $0 \leq |Q(\fq)| \leq 48$  so $6h+2e-126 = |Q(\fq)| \leq  48$ implies $6h+2e \leq 174$
%if q=25, $0\leq |Q(\fq)| \leq 56$ so  $6h+2e-126 = |Q(\fq)| \leq  56$ implies $6h+2e \leq 182$.
\end{proof}

The degree two del Pezzo surfaces we experiment with all contain generalized Eckardt points.  Over small finite fields, split surfaces always have generalized Eckardt points.   But  Kaplan  \cite[Prop 76]{nathan} has found two surfaces over $\F_{23}$ that do not contain these points.

\begin{lemma}  Over $\F_9, \F_{11}$ and $\F_{13},$ split degree two del Pezzo surfaces contain generalized Eckardt points.
\end{lemma}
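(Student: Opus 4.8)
The plan is to argue by a counting/pigeonhole contradiction using equation~(\ref{full}) together with the Weil bounds, examined field by field. Suppose $X_2$ is full over $\fq$ with $q \in \{9,11,13\}$ and that $X_2$ has \emph{no} generalized Eckardt points, i.e. $h = 0$ (equivalently every $h_i = 0$). Then equation~(\ref{full}) collapses to
\begin{equation}\label{hzero}
(q-24)^2 + |Q(\fq)| = 2e - 125,
\end{equation}
so $2e = (q-24)^2 + 125 + |Q(\fq)|$. For $q=9$ this forces $2e \geq 350$, hence $e \geq 175$; for $q=11$ it forces $e \geq 155$; for $q=13$ it forces $e \geq 138$. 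On the other hand, with $h=0$ the intersection relation~(\ref{int}) reads $2e_i + f_i = 27$ for every bitangent $L_i$, so $e_i \leq 13$, and summing gives $3e = \sum e_i \leq 28 \cdot 13 = 364$, i.e. $e \leq 121$. This already contradicts every lower bound above, since $121 < 138$. Hence $h \geq 1$: at least one point of $X_2$ lies on four of the exceptional curves, which is exactly a generalized Eckardt point.

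The second step is to make sure the argument is airtight by not over-relying on $|Q(\fq)| = 0$: even dropping the contribution of $|Q(\fq)|$ (which is $\geq 0$, and in fact $\geq q+1-6\sqrt{q} > 0$ for these $q$ by the Weil bound for the smooth plane quartic $Q$), the left side of~(\ref{hzero}) is at least $(q-24)^2 + 125$, and $(13-24)^2 + 125 = 246 > 242 = 2 \cdot 121$, so the bound $e \leq 121$ is violated by a comfortable margin for all three fields; the margin only grows as $q$ decreases. One should also record the degenerate possibility that the quartic $Q$ is reducible or non-reduced; but a split del Pezzo surface of degree two has a smooth anticanonical double cover, so $Q$ is a smooth plane quartic and the above applies verbatim. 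The only remaining subtlety is the hyperflex configuration: a bitangent meeting $Q$ at a single $\fq$-point contributes to $c_i$, not to $e_i$ or $h_i$, and~(\ref{int}) is stated independently of the $c_i$, so hyperflexes do not affect the inequality $e \leq 121$.

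The main obstacle is essentially bookkeeping rather than conceptual: one must be careful that the quantity $e$ in~(\ref{full}) is the \emph{global} count $e = \frac{1}{3}\sum e_i$ (each Eckardt point lying on three bitangents is counted three times in $\sum e_i$), so the passage from the per-line bound $e_i \leq 13$ to $e \leq 121$ is the only place where a constant could slip, and it should be double-checked that $28 \cdot 13 / 3 = 121.\overline{3}$, so $e \leq 121$ since $e$ is an integer. Given that, the conclusion is immediate. I would present the proof as: assume $h = 0$; invoke~(\ref{full}) to get $2e \geq (q-24)^2 + 125 \geq 246$ for $q \leq 13$; invoke~(\ref{int}) with $h_i = 0$ to get $e \leq 121$; observe $246 > 242$; conclude $h \geq 1$, i.e. a generalized Eckardt point exists.
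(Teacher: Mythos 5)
There is a genuine gap: your argument proves a strictly weaker statement than the lemma. The lemma asserts that every \emph{split} degree two del Pezzo surface over $\F_9$, $\F_{11}$, $\F_{13}$ has a generalized Eckardt point, but your proof begins by assuming $X_2$ is \emph{full}, and it is built entirely on equation~(\ref{full}), which is precisely the fullness condition $|X_2(\F_q)| = L_{2,q}$. For a split surface that is not assumed full, the only available relation is $L_{2,q} \leq |X_2(\F_q)|$, which yields $6h + 2e \leq (q-24)^2 + 125 + |Q(\fq)|$ --- an upper bound on $2e$, not the lower bound your contradiction requires. So when $h=0$ you cannot conclude $e \geq 123$; the inequality points the wrong way and the argument collapses for non-full split surfaces. (This matters for the paper's logic: the lemma is used to justify restricting attention to surfaces with generalized Eckardt points \emph{before} one knows which split surfaces are full; the equivalence ``split iff full'' over these fields is a later corollary that depends on the classification.) The paper's proof avoids this by working line by line rather than globally: since the surface is split, all $28$ bitangents are rational, and each bitangent $L_i$ has only $q+1$ rational points in which to absorb its $27$ intersections with the other bitangents. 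From $27 = 3h_i + 2e_i + f_i$ and $h_i + e_i + f_i + g_i + c_i = q+1$, the assumption $h_i = 0$ forces $e_i + f_i \geq 14$, which is impossible for $q \leq 11$; for $q = 13$ it forces $e_i = 13$, $f_i = 1$ on \emph{every} line, whence $e = \tfrac{1}{3}\sum e_i = \tfrac{364}{3} \notin \Z$, a contradiction. That per-line pigeonhole uses no point count on $X_2$ at all.

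Two secondary points. First, your parenthetical Weil-bound claim $|Q(\fq)| \geq q+1-6\sqrt{q} > 0$ is false for $q \in \{9,11,13\}$ (e.g.\ $12 - 6\sqrt{11} < 0$, and indeed the paper's quartic over $\F_{11}$ has no rational points); you correctly do not rely on it, but it should be deleted. Second, the intermediate bounds $e \geq 155$ (for $q=11$) and $e \geq 138$ (for $q=13$) do not follow from your own displayed equation, which gives $e \geq 147$ and $e \geq 123$ respectively; the final comparison $246 > 242$ is right, so this is only an arithmetic slip, but within the fullness-restricted setting your global bound $e \leq 121$ versus $e \geq 123$ is a much thinner margin than the per-line argument provides.
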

\begin{proof}
Over a field with $q$ elements, every bitangent to the quartic $Q$ contains exactly $q+1$ $\fq$-rational points.   
When $q=9$, $h_i=9$  for each $1\leq i\leq 28$, so $X_2$ contains $2\cdot (\frac{28\cdot 9}{4})=126$ generalized Eckardt points.
When $q=11$,  there are 12 points on each bitangent so $h_i\geq 1$ for each $1\leq i \leq 28$.  Thus $X_2$ contains at least  $2\cdot (\frac{28\cdot 1}{4})=14$ generalized Eckardt points.

Now suppose by way of contradiction that  the size of the ground field is 13 and the del Pezzo does not contain any generalized Eckardt points.  Then, every bitangent to $Q$ must intersect 2 lines simultaneously at 13 points, $e_i=13, f_i=1$ for all $1\leq i \leq 28$. This  means that the number of Eckardt points on the corresponding del Pezzo surface  is $2\cdot(\frac{28\cdot 13}{3})=\frac{364}{3} \notin \mathbb{Z}$.   Thus, the surface must contain a generalized Eckardt point.
\end{proof}

\section{Kuwata Curves}\label{knecht:seclabel3}
One reason we restrict ourselves to surfaces with generalized Eckardt points  is that over an algebraically closed field, a quartic curve $Q$ admitting  a generalized Eckardt point is equivalent to $Q$ admitting an involution  \cite[Theorem 5.1]{MR2153953}.  Thus there is nice symmetry between the lines.   Kuwata gives explicit equations for the quartic curves which admit a pair of commuting involutions, and these are the surfaces we study \cite{MR2153953}.

\begin{theorem}[Kuwata]  Let $Q$ be a quartic plane curve with a pair of commuting involutions.  By a change of coordinates, possibly defined over a field extension, $Q$ can be defined by an equation of the form
$$a_1x^4+a_2y^4+a_3z^4+a_4x^2y^2+a_5y^2z^2+a_6z^2x^2=0.$$
\end{theorem}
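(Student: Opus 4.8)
The plan is to reduce everything to the classification of involutions in $\mathrm{PGL}_3$. First I would use that a smooth plane quartic is its own canonical model: since such a $Q$ has genus three and the hyperplane class restricts to $K_Q$, every automorphism of $Q$ acts on $H^0(Q,\omega_Q)\cong \overline{k}^{\,3}$ and is therefore induced by a projective linear automorphism of $\P^2$ preserving $Q$. So ``a pair of commuting involutions'' of $Q$ means a pair $\sigma,\tau$ of distinct order-two elements of $\mathrm{PGL}_3(\overline{k})$ with $\sigma\tau=\tau\sigma$, both preserving $Q$. (The hypothesis of odd characteristic, under which $-1\neq 1$, is used throughout.)

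Next I would diagonalize $\sigma$ and $\tau$ simultaneously, working over $\overline{k}$; this is exactly what produces the phrase ``possibly defined over a field extension'' in the statement, and I would make no descent claim. Over $\overline{k}$ a nontrivial involution in $\mathrm{PGL}_3$ lifts, after dividing a lift by a square root of its scalar square, to an involution in $\mathrm{GL}_3$ with eigenvalues in $\{1,-1\}$; there is a unique conjugacy class of such, represented by $\mathrm{diag}(1,1,-1)$, so I may take $\sigma=\mathrm{diag}(1,1,-1)$. For $\tau$: choosing any lift $\widetilde\tau\in\mathrm{GL}_3$, commutativity in $\mathrm{PGL}_3$ gives $\widetilde\tau\,\sigma\,\widetilde\tau^{-1}=\lambda\sigma$ for a scalar $\lambda$, and comparing characteristic polynomials (one with roots $1,1,-1$, the other with roots $\lambda,\lambda,-\lambda$) forces $\lambda=1$. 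Hence $\widetilde\tau$ commutes with $\sigma$ in $\mathrm{GL}_3$, so it preserves the two eigenspaces of $\sigma$ and is block diagonal with a $2\times 2$ block $A$ on the $(x,y)$-eigenspace and a scalar block $d$ on the $(0:0:1)$-eigenspace. Since $\tau$ is projectively an involution, $(A/d)^2=I$, so $A/d$ is diagonalizable over $\overline{k}$ with eigenvalues $\pm1$; diagonalizing it by a change of the $x,y$-coordinates (which commutes with $\sigma$) puts $\tau$ into diagonal form with entries in $\{1,-1\}$ as well. Since $\sigma,\tau$ are distinct, nontrivial and commuting, $\{1,\sigma,\tau,\sigma\tau\}$ is a Klein four-group of diagonal classes; but the diagonal subgroup $\{\mathrm{diag}(\pm1,\pm1,\pm1)\}$ of $\mathrm{PGL}_3$ has exactly four elements, so $\langle\sigma,\tau\rangle$ equals it and contains, up to scalars, $\mathrm{diag}(1,1,-1)$ and $\mathrm{diag}(1,-1,1)$.

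Finally I would read off the normal form from invariance of the defining quartic $F$. Invariance under $\mathrm{diag}(1,1,-1)$ means $F(x,y,-z)=\pm F(x,y,z)$; the sign $-1$ would force $z\mid F$, which is impossible for a smooth, hence irreducible and non-linear, quartic, so $F$ is even in $z$, and likewise $F$ is even in $y$. A monomial $x^{a}y^{b}z^{c}$ with $a+b+c=4$ and $b,c$ even automatically has $a$ even, and the only such monomials are $x^4,y^4,z^4,x^2y^2,y^2z^2,z^2x^2$, which is the asserted equation. The step I expect to be the main obstacle is the middle one: the careful bookkeeping of lifts from $\mathrm{PGL}_3$ to $\mathrm{GL}_3$ and of the scalars that appear (the characteristic-polynomial comparison, the rescalings needed to make squares equal the identity), together with the observation that the two involutions must genuinely generate the full diagonal Klein four-group — if $\tau$ coincided with $\sigma$ or with the identity, the normal form would fail. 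A secondary point worth flagging is that the reduction uses $Q$ smooth (or at least reduced and irreducible with no coordinate line as a component); this is harmless here since $X_2$ is smooth.
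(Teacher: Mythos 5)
Your argument is correct. Note, however, that the paper contains no proof of this statement: it is quoted as a black box from Kuwata \cite{MR2153953}, so there is nothing in the paper to compare your proof against; your route (the canonical embedding realizes $\mathrm{Aut}(Q)$ inside $\mathrm{PGL}_3$, lift the two commuting involutions to $\mathrm{GL}_3$ and simultaneously diagonalize them, then read off the monomials even in two --- hence all three --- variables) is the standard argument and presumably essentially Kuwata's. The only caveats are ones you already flag correctly: the statement implicitly assumes $Q$ is smooth (true here, as the branch curve of a smooth $X_2$), that the characteristic is odd (the paper's standing hypothesis from Section 2 on), and that the two involutions are distinct and nontrivial; your treatment of the scalar $\lambda$ via the characteristic polynomial, and of the degenerate cases where the block $A/d$ is $\pm I$ (which would force $\tau$ trivial or equal to $\sigma$), closes the points where a sloppier version of this argument would leak.
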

Kuwata uses the equation above to find a family of quartic curves whose  twenty-eight bitangent lines are all defined over the base field which does not need to be algebraically closed.  We call the curves $C_{\lambda\mu \nu  }$ defined below  \textit{Kuwata curves}  \cite{MR2153953}.
\begin{theorem}[Kuwata] 
Let $\lambda, \mu, \nu$ be three elements in $\F_q$ satisfying
$$(1-\lambda^2)(1-\mu^2)(1-\nu^2)(1-\mu^2\lambda^2)(1-\nu^2\lambda^2)(1-\mu^2\nu^2)(1-\lambda^2\mu^2\nu^2)\neq 0.$$
Let $C_{ \lambda\mu \nu}$ be the quartic plane curve given by
\begin{multline*}
((1- \mu^2\nu^2)  x^2+(1-\nu^2\lambda^2)y^2+    (1-\mu^2\lambda^2)z^2  )^2\\
  - 4(1-\lambda^2\mu^2\nu^2)\left( (1-\nu^2)x^2y^2 + (1-\lambda^2)y^2z^2+(1-\mu^2)z^2x^2  \right) =0.
\end{multline*}
The twenty-eight double tangent lines of $C_{\lambda\mu \nu } $ are given by
\begin{gather*}
x \pm \lambda y = 0,   \hskip 1cm  y \pm \mu x= 0,    \hskip 1cm z\pm \nu x = 0, \\
 x \pm \lambda z=0,   \hskip 1cm  y \pm \mu z =0 ,    \hskip 1cm  z \pm \nu y=0,\\
(1+ \mu \nu)x \pm (1+\nu \lambda)y \pm (1-\lambda \mu)z =0,\\
(1+ \mu \nu)x \pm (1-\nu \lambda)y \pm (1+\lambda \mu)z =0,\\
(1- \mu \nu)x \pm (1+\nu \lambda)y \pm (1+\lambda \mu)z =0,\\
(1- \mu \nu)x \pm (1-\nu \lambda)y \pm (1-\lambda \mu)z =0.
\end{gather*}

\end{theorem}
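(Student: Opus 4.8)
The plan is to verify by direct substitution that each of the twenty-eight lines listed is a double tangent line of $C_{\lambda\mu\nu}$, and then to invoke the classical fact that a smooth plane quartic has exactly twenty-eight bitangents in order to conclude that there are no others. The working criterion, implicit in the definitions above, is that a line $\ell\subset\P^2$ is a bitangent of a smooth quartic $\{F=0\}$ exactly when, after parametrizing $\ell$, the binary quartic $F|_\ell$ is a nonzero scalar multiple of the square of a binary quadratic form: if $F|_\ell=(\alpha s^2+\gamma t^2)^2$ one gets $\ell\cap Q=2P_1+2P_2$, and if $F|_\ell=(\beta st)^2$ one gets $\ell\cap Q=4P_1$. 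So the content of the theorem is the computation of these restrictions, together with a count.

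First I would cut down the number of lines that must actually be handled, using the symmetry of Kuwata's construction. Since the defining equation of $C_{\lambda\mu\nu}$ involves $x,y,z$ only through $x^2,y^2,z^2$, the sign changes $x\mapsto-x$, $y\mapsto-y$, $z\mapsto-z$ are automorphisms of the curve and hence permute its bitangents; on the displayed list they flip the indicated $\pm$ signs. A permutation of the coordinates carried out together with the corresponding permutation of $(\lambda,\mu,\nu)$ identifies $C_{\lambda\mu\nu}$ with $C_{\sigma(\lambda,\mu,\nu)}$ and matches the two displayed lists; and since only $\lambda^2,\mu^2,\nu^2$ appear, $C_{-\lambda,\mu,\nu}$ is literally $C_{\lambda\mu\nu}$, which forces still more coincidences among the listed lines. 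Tracking these actions one checks that the twelve lines in the first two rows form a single orbit, and the sixteen lines in the last four rows form a single orbit (the substitution $\lambda\mapsto-\lambda$, which fixes the curve, being what links the four sign-families of the last rows). Hence it suffices to verify the criterion for two representatives, say $\ell_1:\ x+\lambda y=0$ and $\ell_2:\ (1+\mu\nu)x+(1+\nu\lambda)y+(1-\lambda\mu)z=0$, with $\lambda,\mu,\nu$ left general subject to the stated nonvanishing condition.

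For the first orbit there is a structural shortcut. Setting $u=x^2,\ v=y^2,\ w=z^2$, the curve $C_{\lambda\mu\nu}$ is the preimage under $(x,y,z)\mapsto(u,v,w)$ of the conic $\Gamma:\ P(u,v,w)^2-4(1-\lambda^2\mu^2\nu^2)\,S(u,v,w)=0$, where $P$ is the linear form $(1-\mu^2\nu^2)u+(1-\nu^2\lambda^2)v+(1-\mu^2\lambda^2)w$ and $S$ the quadratic form $(1-\nu^2)uv+(1-\lambda^2)vw+(1-\mu^2)wu$. The two lines $x\pm\lambda y=0$ together form the preimage of the single line $\{u=\lambda^2 v\}\subset\P^2_{uvw}$, and $\ell_1$ is a bitangent of $C_{\lambda\mu\nu}$ precisely when $\{u=\lambda^2 v\}$ is tangent to $\Gamma$ — a simple tangency point, away from the ramification of the squaring map, pulling back to two points each with multiplicity two. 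Tangency of $\{u=\lambda^2 v\}$ to $\Gamma$ is the vanishing of the discriminant of the binary quadratic $\Gamma|_{\{u=\lambda^2 v\}}$, an identity in $\lambda,\mu,\nu$ that is confirmed by a short expansion; the five other lines of this type, namely $\{v=\mu^2 u\}$, $\{w=\nu^2 u\}$, $\{u=\lambda^2 w\}$, $\{v=\mu^2 w\}$, $\{w=\nu^2 v\}$, are then covered by the symmetry reduction, or treated identically.

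The sixteen lines of the second orbit are where the real work lies, and I expect this to be the main obstacle. For $\ell_2$ one substitutes $z=-\bigl((1+\mu\nu)x+(1+\nu\lambda)y\bigr)/(1-\lambda\mu)$, clears denominators, and must show the resulting binary quartic in $(x,y)$ is a perfect square; since $\ell_2$ does not descend to a line in $\P^2_{uvw}$, the conic shortcut is unavailable and one is reduced to checking that the coefficients of this quartic satisfy, identically in $\lambda,\mu,\nu$, the relations making it the square of a quadratic form — equivalently, that it has a double-root structure $2P_1+2P_2$ or $4P_1$. This is precisely the identity Kuwata establishes; it is of the kind that can be verified either by a careful hand computation or, as in the treatment of the Kaplan curves elsewhere in this paper, with a computer algebra system. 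Finally, the hypothesis that $(1-\lambda^2)(1-\mu^2)(1-\nu^2)(1-\mu^2\lambda^2)(1-\nu^2\lambda^2)(1-\mu^2\nu^2)(1-\lambda^2\mu^2\nu^2)$ is nonzero ensures both that $C_{\lambda\mu\nu}$ is smooth and that the twenty-eight lines listed are pairwise distinct; since a smooth plane quartic carries exactly twenty-eight bitangents, the lines produced above are all of them.
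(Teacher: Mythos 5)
This theorem is not proved in the paper at all: it is quoted verbatim from Kuwata \cite{MR2153953} as imported background, so there is no in-paper argument to measure your proposal against. Judged on its own terms, your verification scheme is sensible and the reductions are genuinely useful. The symmetry reduction (sign changes, coordinate permutations coupled with permutations of $(\lambda,\mu,\nu)$, and the relabelings $\lambda\mapsto-\lambda$, etc., which fix the curve) does correctly cut the problem down to two representative lines, and the conic trick for the first orbit is correct: substituting $u=\lambda^2v$ into $P^2-4(1-\lambda^2\mu^2\nu^2)S$ gives a binary quadratic whose discriminant factors as $16(1-\lambda^2\mu^2)^2(1-\lambda^2\mu^2\nu^2)\bigl[(1-\lambda^2\mu^2\nu^2)-(1+\lambda^2)+\lambda^2\nu^2(1+\mu^2)+\lambda^2(1-\nu^2)\bigr]$, and the bracket vanishes identically; the pullback of a double point under $[y:z]\mapsto[y^2:z^2]$ is everywhere even, so the line upstairs is a bitangent whether or not the tangency point is a branch point.

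The gaps are these. First, the sixteen lines of the second orbit are the entire substance of the theorem, and for them you write down what must be checked (that the restricted binary quartic is a perfect square, identically in $\lambda,\mu,\nu$) and then defer it to ``a careful hand computation or a computer algebra system.'' That is the proof; without it you have an outline, not an argument. Second, your final step --- ``a smooth plane quartic has exactly twenty-eight bitangents, hence there are no others'' --- needs two inputs you assert but do not establish: that the stated nonvanishing condition forces $C_{\lambda\mu\nu}$ to be smooth, and that the twenty-eight listed lines are pairwise distinct. The latter is actually false as stated when, say, $\lambda=0$ (the hypothesis displayed in the theorem permits this, yet then $x\pm\lambda y=0$ and $x\pm\lambda z=0$ all collapse to the single line $x=0$), so either an implicit hypothesis such as $\lambda\mu\nu\neq0$ must be surfaced or the degenerate cases must be handled separately. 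Finally, the count of $28$ bitangents for a smooth quartic requires odd characteristic; the paper does work over fields of odd characteristic, but your argument should say where it uses this.
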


We use  the computer algebra system MAGMA to find all Kuwata curves over our desired fields.  Then we use the equations of the lines to determine the intersection form $(h, e, f, g, c)$ of each line.  With this information, we are able to find full degree two del Pezzo surfaces.   The following section lists the results we were able to find while experimenting with Kuwata curves in MAGMA.

Another recent result involving degree two del Pezzo surfaces with generalized Eckardt points is due  to Salgado, Testa,  and V{\'a}rilly-Alvarado \cite{MR3245139}.   Let $q_2, q_4$ be homogeneous polynomials of degrees two and
four respectively  such that the polynomial $q_2^2-4q_4$ has distinct roots.   When the characteristic of the ground field is not two,   the surface $X_2$ in the weighted projective space $\P(1, 1, 1, 2)$
with equation $w^2 = x^4 + q_2(y, z)x^2+ q_4(y, z)$
is a smooth del Pezzo surface of degree two with an involution given by $x\mapsto -x$ and contains  at least two generalized Eckardt points $[1,0,0, 1]$ and $[1,0,0,-1]$. 
In fact, every del Pezzo surface of degree two with a point
contained in four exceptional curves has an involution and is of the form described  above.

\section{Full Surfaces over Small Finite Fields}
In this section we present the results we found using MAGMA and Kuwata curves.  For each of the finite fields we list the full degree two del Pezzo surfaces and how each of the bitangents to the quartic $Q$ intersects the other bitangents in the form $(h,e,f,g,c) $ defined above.

Our first result on full degree 2 del Pezzo surfaces was first found by Hirschfeld  when he was studying full cubic surfaces\cite{MR0233272}.  He found that there is only one cubic surface over $\F_9$ with exactly one rational point not on the lines.  This point can be blown-up to create a full degree two del Pezzo. 

\begin{theorem}
Over the field $\F_9$ there exists a unique full degree two del Pezzo surface  up to isomorphism.   It is given by the equation $$w^2=x^4+y^4+z^4$$ and all of the lines on $X$  contain nine generalized Eckardt points. 
\end{theorem}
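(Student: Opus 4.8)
The plan is to establish two things: that the Fermat quartic $Q: x^4+y^4+z^4=0$ over $\F_9$ yields a full degree two del Pezzo surface, and that this is the only one up to isomorphism. For the fullness claim, I would invoke the discussion in Section~\ref{knecht:seclabel2}: over $\F_9$ every bitangent contains exactly $q+1=10$ rational points, and since up to four bitangents can meet at a point and each bitangent must meet the other $27$, equation~(\ref{int}) forces $3h_i+2e_i+f_i=27$ while equation~(\ref{line}) gives $h_i+e_i+f_i+g_i+c_i=9$ after subtracting the $1$. The extremal configuration $h_i=9$, $e_i=f_i=g_i=c_i=0$ for all $i$ is exactly the one that makes all $27$ intersection points of a given bitangent coincide into $9$ points of multiplicity-four intersection. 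I would verify that the Fermat quartic realizes this: it is the Kuwata curve (or a specialization thereof — the Fermat quartic arises when $\lambda=\mu=\nu=0$ reduces the equation to $(x^2+y^2+z^2)^2 - 4(x^2y^2+y^2z^2+z^2x^2)=0$, which is $(x^2+y^2+z^2)^2$ minus... — one must check the precise specialization, but more directly one uses that over $\F_9$ the Fermat quartic has $28$ hyperflexes as stated in the excerpt, and its $28$ bitangents, being the lines $x = \zeta y$ etc. for $\zeta^4=-1$, intersect in a highly symmetric pattern). Plugging $h=\frac14\sum h_i = 63$, $e=0$, $|Q(\F_9)|=0$ into $L_{2,q}=6h+2e+28(2q-25)-|Q(\fq)|$ gives $L_{2,9}=378+28\cdot 11 = 378+308=686$, and Weil's count gives $|X_2(\F_9)| = 81+72+1 = 154$... so I must recheck the arithmetic and the sign conventions, but the point is that substituting the extremal data into~(\ref{full}), namely $(q-24)^2+|Q(\fq)| = 6h+2e-125$ with $q=9$, reads $225+0 = 378-125 = 253$, which does \emph{not} hold, so in fact the correct configuration over $\F_9$ must have $|Q(\F_9)|\neq 0$ — indeed the Fermat quartic over $\F_9$ has rational points. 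The actual verification is: compute $|Q(\F_9)|$ for the Fermat quartic, compute the true intersection data $(h,e,f,g,c)$ from the explicit bitangent equations, and check~(\ref{full}) holds; the excerpt tells us it does and that all lines have the form $(9,0,0,0,c)$, so $h=63$ and one solves for $|Q(\F_9)|$ forced by~(\ref{full}).

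For the uniqueness, the key input is that the field is small enough that the configuration is extremely rigid. I would argue as follows. By Lemma~\ref{knecht:seclabel2} (the second lemma), every split degree two del Pezzo surface over $\F_9$ has all $h_i=9$, hence every intersection point of bitangents is a generalized Eckardt point, every bitangent meets $Q$ in $c_i$ rational points with $\sum c_i = |Q(\F_9)|$, and in particular $e_i=f_i=g_i=0$ for all $i$. This says the combinatorial incidence structure of the $28$ bitangents is completely determined: it is the unique (up to the $W(E_7)$-symmetry) configuration in which the $756$ pairwise intersections collapse into $\binom{28}{2}/\binom{4}{2} = 378/6 = 63$ quadruple points... — here I would need the counting to work out, $63$ generalized Eckardt points each accounting for $\binom42=6$ of the $\binom{28}{2}=378$ pairs, which checks. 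So over $\F_9$ the only possibility for the incidence pattern is this maximal one, and by Kuwata's theorem together with the Salgado--Testa--Várilly-Alvarado result quoted in Section~\ref{knecht:seclabel3} (every degree two del Pezzo with a generalized Eckardt point has an involution and is of the form $w^2 = x^4+q_2(y,z)x^2+q_4(y,z)$), any such surface is a double cover branched over a quartic of Kuwata type. I would then run through the finitely many Kuwata curves $C_{\lambda\mu\nu}$ over $\F_9$ (which is what the MAGMA computation in Section~4 does), check which ones give all $h_i=9$, and observe that up to the coordinate changes/isomorphisms all such surfaces are isomorphic to $w^2 = x^4+y^4+z^4$.

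Alternatively, and perhaps more cleanly, I would lean on the historical route indicated just before the theorem: Hirschfeld showed there is a unique cubic surface over $\F_9$ with exactly one rational point off the $27$ lines, and blowing up that point produces a full degree two del Pezzo; conversely any full degree two del Pezzo over $\F_9$ arises this way (or one argues directly), giving uniqueness. So the proof would have two self-contained halves: (a) produce the Fermat example and verify fullness via~(\ref{full}) using the explicit $28$ bitangents $\{x=\zeta y, x = \zeta z, y = \zeta z : \zeta^4 = -1\}$ and their incidences — a finite check that all intersection points are quadruple points and counting $|Q(\F_9)|$ — and (b) prove uniqueness either by the rigidity of the $h_i=9$ configuration plus Kuwata's classification, or by descent from Hirschfeld's cubic surface result.

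The main obstacle I anticipate is part (b), the uniqueness: showing that the combinatorial constraint ``all $h_i=9$'' pins down the surface up to isomorphism requires either a careful orbit analysis under $W(E_7)$ acting on configurations of $28$ bitangents, or an exhaustive (but finite) search through Kuwata curves over $\F_9$ modulo isomorphism, and one must make sure the isomorphism classes are correctly identified — two different-looking Kuwata parameter triples $(\lambda,\mu,\nu)$ can give isomorphic surfaces. The fullness verification in part (a) is essentially a finite computation and should be routine once the bitangent list and the value of $|Q(\F_9)|$ are in hand; the statement that all lines have intersection type $(9,0,0,0,c)$ follows immediately from $q+1=10$ together with~(\ref{int}).
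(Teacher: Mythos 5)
Your proposal has the right general shape (verify the Fermat example by explicit computation, then argue uniqueness), but both halves contain genuine gaps. In part (a), your closing claim that the intersection type $(9,0,0,0,c)$ ``follows immediately from $q+1=10$ together with (\ref{int})'' is false: the types $(8,1,1,0,0)$ and $(7,3,0,0,0)$ also satisfy both $3h_i+2e_i+f_i=27$ and $h_i+e_i+f_i+g_i+c_i=10$, so those two constraints alone do not force $h_i=9$. One must actually compute the incidences of the $28$ bitangents of the Fermat quartic over $\F_9$ (the paper does this in MAGMA, finding every line of type $(9,0,0,0,1)$ and $|Q(\F_9)|=28$, all hyperflexes). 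Your arithmetic for $L_{2,9}$ also goes astray ($28(2\cdot 9-25)=-196$, not $+308$), though you recover by falling back on (\ref{full}) and correctly deducing that $|Q(\F_9)|=28$ is forced once $h=63$ and $e=0$.

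The more serious gap is uniqueness. Neither of your proposed routes closes. The Kuwata classification applies to quartics admitting a \emph{pair of commuting} involutions, while a generalized Eckardt point (via Salgado--Testa--V\'arilly-Alvarado) supplies only one involution, so ``any such surface is of Kuwata type'' does not follow; and even granting all $h_i=9$, identical incidence combinatorics does not by itself pin down the surface up to isomorphism. The Hirschfeld descent likewise needs, and you do not supply, an argument that every full $X_2$ over $\F_9$ blows down to his cubic surface. The paper's actual route is different and largely external: existence comes from Hirschfeld's blow-up construction and the explicit Kuwata/MAGMA computation, while uniqueness is obtained by citing Kaplan's exhaustive classification showing there is exactly one split degree two del Pezzo surface over $\F_9$ up to isomorphism (combined with the observation that full implies split). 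If you want a self-contained proof you would need to reproduce something like Kaplan's enumeration; as written, your uniqueness argument is a plan whose hardest step --- which you yourself flag --- is left open.
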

All bitangents  are all of type (9,0,0,0,1) and the quartic $Q$ has exactly 28 $\F_9$-rational points, all of which are hyperflexes.  
\begin{center}
\includegraphics[scale= .5]{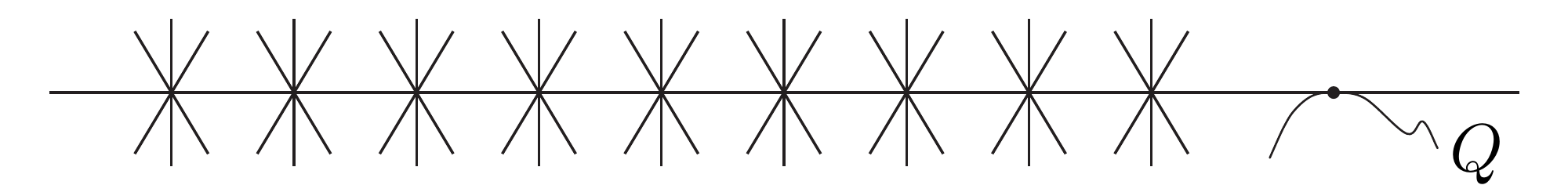}
\end{center}

\begin{theorem}
Over the field $\F_{11}$ there exists a unique full degree two del Pezzo surface up to isomorphism.   It is given by the equation $$w^2=x^4 + y^4 + z^4 + (x^2y^2 + x^2z^2 + y^2z^2).$$ \end{theorem}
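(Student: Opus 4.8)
The plan is to reduce the classification to a bounded search among Kuwata curves over $\F_{11}$ and then to carry that search out, the fullness criterion (\ref{full}) and the structure theory for degree two del Pezzo surfaces with generalized Eckardt points doing most of the work. First I would note that a full $X_2$ over $\F_{11}$ automatically carries generalized Eckardt points: since $q=11$, every bitangent of the branch quartic $Q$ contains $q+1=12$ rational points, so equations (\ref{line}) and (\ref{int}) force $h_i\geq 1$ on each of the $28$ bitangents (this is exactly the argument of the Lemma on generalized Eckardt points over $\F_9,\F_{11},\F_{13}$), whence $h\geq 7$. By the theorem of Salgado--Testa--V\'arilly-Alvarado recalled in Section~3, the presence of a generalized Eckardt point gives $X_2$ an involution defined over $\F_{11}$ and a model $w^2 = x^4 + q_2(y,z)x^2 + q_4(y,z)$; using the remaining generalized Eckardt points (there are at least $14$ of them) I would produce a second involution commuting with the first, so that by Kuwata's first theorem $Q$ takes the diagonal shape $\sum a_i x^4 + \sum a_j x^2 y^2 = 0$ and, matching against the twenty-eight explicit bitangents, $X_2$ is identified with a Kuwata surface $C_{\lambda\mu\nu}$. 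One must check that this normalisation is available over $\F_{11}$ itself and not merely over $\overline{\F}_{11}$; this is where oddness of $q$ and the rationality of all $28$ bitangents of a Kuwata curve are used.

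Next I would specialise (\ref{full}) to $q=11$, getting $169 + |Q(\F_{11})| = 6h+2e-125$, that is $6h+2e = 294 + |Q(\F_{11})|$. Because $X_2$ is smooth, $Q$ is a smooth plane quartic of genus $3$, so the Weil bound gives $|Q(\F_{11})|\leq 12 + 6\sqrt{11} < 33$; combined with the per-bitangent bounds $1\leq h_i\leq 9$ and $e_i\leq 13$ coming from (\ref{line}) and (\ref{int}), this confines the global profile $(h,e,f,g,c)$ to a short explicit list (in particular $h$ is forced to be large and $e$ comparatively small). These inequalities both cut down the search and provide a consistency check on its output.

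Then I would run the computation described in Section~3: enumerate all triples $(\lambda,\mu,\nu)\in\F_{11}^{3}$ satisfying Kuwata's nonvanishing condition, form each curve $C_{\lambda\mu\nu}$ with its listed $28$ bitangents, and for every bitangent $L_i$ compute $(h_i,e_i,f_i,g_i,c_i)$ by intersecting the lines in $\P^2(\F_{11})$ and counting $L_i\cap Q$; assembling $(h,e,f,g,c)$ and testing (\ref{full}) isolates the full Kuwata surfaces. Finally I would verify that, modulo $\mathrm{PGL}_3(\F_{11})$ (including coordinate permutations and the sign changes preserving the diagonal form), exactly one isomorphism class survives, and that it is represented by $w^2 = x^4+y^4+z^4+(x^2y^2+x^2z^2+y^2z^2)$ --- checking directly that this quartic is smooth over $\F_{11}$ and that its invariants satisfy (\ref{full}).

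The main obstacle is the reduction in the first paragraph: proving that fullness over $\F_{11}$ forces an $\F_{11}$-isomorphism with a Kuwata curve, i.e. that both commuting involutions and the diagonalising change of coordinates can be realised over the ground field rather than only over $\overline{\F}_{11}$. Once that reduction is secured, the rest is the bounded MAGMA search indicated in the text together with a routine isomorphism comparison.
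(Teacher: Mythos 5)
Your overall architecture (bound the invariants via (\ref{full}), reduce to a finite search, run it, sort by isomorphism) is in the spirit of the paper, but the step you yourself flag as ``the main obstacle'' is a genuine gap, and it is not the route the paper takes. From a single generalized Eckardt point, the Salgado--Testa--V\'arilly-Alvarado result gives you \emph{one} involution; your plan to extract a \emph{second, commuting} involution ``using the remaining generalized Eckardt points'' is unsupported --- distinct generalized Eckardt points give distinct involutions with no reason to commute, and Kuwata's normalization is only guaranteed after a change of coordinates ``possibly defined over a field extension,'' so even granting two commuting involutions you have not placed $Q$ in the rational Kuwata family $C_{\lambda\mu\nu}$ over $\F_{11}$. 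Worse, the implication you are trying to prove (full $\Rightarrow$ isomorphic to a Kuwata curve over the ground field) is false over only slightly larger fields: the paper exhibits full surfaces over $\F_{17}$ and $\F_{19}$ (equations (\ref{f17_5}), (\ref{f17_6}), (\ref{f19_3})--(\ref{f19_5})) that are explicitly \emph{not} Kuwata curves, even though every split surface over those fields of size $\le 13$ has generalized Eckardt points by the same counting argument you invoke. So your reduction cannot be patched by a softer version of the same idea; completeness needs an independent input.

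The paper supplies that input by citation rather than by structure theory: Kaplan \cite[Prop 71]{nathan} has enumerated \emph{all} split degree two del Pezzo surfaces over $\F_{11}$ up to isomorphism, there is exactly one class, and the authors verify (via the MAGMA computation on Kuwata curves) that this class is full, with all $28$ bitangents of type $(3,9,0,0,0)$ and $|Q(\F_{11})|=0$; this also yields the corollary that over $\F_{11}$ full is equivalent to split. Your verification half (specializing (\ref{full}) to $6h+2e=294+|Q(\F_{11})|$, which the type $(3,9,0,0,0)$ data satisfies with $h=21$, $e=84$, $|Q|=0$) is fine and consistent with the paper. To complete your argument you would need either to import Kaplan's classification as the paper does, or to give a genuinely new proof that every split $X_2$ over $\F_{11}$ is isomorphic to the stated one --- neither of which your proposal currently contains.
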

 All of the bitangents to $Q$  are of type (3,9,0,0,0).  The quartic equation $x^4 + y^4 + z^4 + (x^2y^2 + x^2z^2 + y^2z^2)$ does not have any solutions over $\F_{11}$.
\begin{center}
\includegraphics[scale= .5]{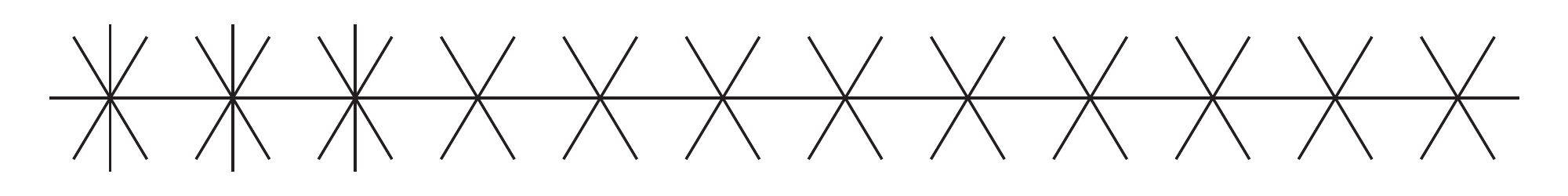}
\end{center}

\begin{theorem}
Over the field $\F_{13}$ there exist exactly  two full degree two del Pezzo surfaces up to isomorphism.  
\begin{equation}\label{f13_1}
w^2=x^4 + y^4 + z^4 + 8(x^2y^2 + x^2z^2 + y^2z^2)
\end{equation}
and
\begin{equation} \label{f13_2}
w^2=x^4 + y^4 + z^4 -x^2y^2.
\end{equation}
\end{theorem}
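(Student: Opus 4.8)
The plan is to reduce the classification to a finite search over $\F_{13}$, run that search using the explicit bitangent formulas, and then pin down the isomorphism count by combinatorial invariants.

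\textbf{Reduction to a normal form.} By the Lemma established above, every split degree two del Pezzo surface over $\F_{13}$ contains a generalized Eckardt point. Hence, by the theorem of Salgado--Testa--V{\'a}rilly-Alvarado quoted in the previous section, any such surface is isomorphic to one in $\P(1,1,1,2)$ with equation $w^2 = x^4 + q_2(y,z)x^2 + q_4(y,z)$ where $q_2^2 - 4q_4$ has distinct roots; equivalently the branch quartic $Q$ carries an involution, and after a linear change of coordinates one may take $Q$ to be a Kuwata curve $C_{\lambda\mu\nu}$. (One should check that over $\F_{13}$ the coordinate change can be performed without passing to an extension; this is the one spot where the reduction needs care.) It therefore suffices to run over the finitely many triples $(\lambda,\mu,\nu)\in\F_{13}^3$ satisfying Kuwata's nondegeneracy inequality.

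\textbf{Computing the intersection data and selecting the full surfaces.} For each admissible $(\lambda,\mu,\nu)$, Kuwata's description of the twenty-eight double tangent lines of $C_{\lambda\mu\nu}$ is completely explicit, so in MAGMA one computes, for each bitangent $L_i$, how its $q+1=14$ rational points are distributed according to the number of other bitangents through them, together with $c_i=|L_i\cap Q(\F_{13})|$. This gives the type $(h_i,e_i,f_i,g_i,c_i)$ for every line, consistent with (\ref{line}) and (\ref{int}), and hence $h=\tfrac14\sum h_i$, $e=\tfrac13\sum e_i$, and $c=|Q(\F_{13})|=\sum c_i$. By (\ref{full}), with $q=13$ the resulting del Pezzo surface is full exactly when $6h+2e = (13-24)^2 + |Q(\F_{13})| + 125 = 246 + |Q(\F_{13})|$ (and one still needs $c=|Q(\F_{13})|\le 56$). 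Scanning the list, precisely the surfaces (\ref{f13_1}) and (\ref{f13_2}) survive; one records their bitangent types --- all twenty-eight of a single type for the symmetric surface (\ref{f13_1}), and the corresponding configuration for (\ref{f13_2}) --- as part of the output, and rewrites the two curves in the clean forms displayed.

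\textbf{Isomorphism classification.} Two points remain, and this is where the real work lies. First, the two surfaces must be shown non-isomorphic; I would distinguish them by a coordinate-free invariant, e.g. the number $2h$ of generalized Eckardt points, the number $c=|Q(\F_{13})|$ of branch points, or --- most robustly --- the combinatorial incidence type of the twenty-eight-line configuration (which multiplicities $h_i,e_i,f_i$ occur and with what multiplicity over the $28$ lines). Second, and harder, one must be certain the enumeration is exhaustive up to isomorphism: the parameter set $\{(\lambda,\mu,\nu)\}$ is acted on by the finite group of coordinate changes preserving the Kuwata shape (sign changes and permutations of $x,y,z$, together with the substitutions permuting $\lambda,\mu,\nu$), so one must either quotient by that group before scanning, or --- what I would actually do --- have MAGMA test, for each pair of full surfaces found with matching invariants, whether some explicit element of $\mathrm{PGL}_3(\F_{13})$ carries one branch quartic to the other, and confirm that the full list collapses to exactly two classes. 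Establishing the completeness of this isomorphism test (a finite but potentially large search), rather than the fullness computation itself, is the main obstacle.
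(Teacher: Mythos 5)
There is a genuine gap, and it sits exactly where you did not expect it: in the reduction to Kuwata normal form, not in the isomorphism test. Your chain of implications is ``split over $\F_{13}$ $\Rightarrow$ generalized Eckardt point (by the Lemma) $\Rightarrow$ involution on the branch quartic $Q$ $\Rightarrow$ $Q$ is a Kuwata curve $C_{\lambda\mu\nu}$.'' The last step fails: Kuwata's explicit family $C_{\lambda\mu\nu}$ parametrizes quartics admitting a \emph{pair of commuting involutions}, which is strictly stronger than admitting the single involution guaranteed by a generalized Eckardt point (equivalently, by the Salgado--Testa--V\'arilly-Alvarado normal form $w^2=x^4+q_2(y,z)x^2+q_4(y,z)$). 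Moreover, even for quartics that do have two commuting involutions, Kuwata's change of coordinates is only guaranteed over a field extension, as you note. The paper itself exhibits the failure of your reduction in nearby cases: over $\F_{17}$ and $\F_{19}$ it finds full surfaces (equations (\ref{f17_5}), (\ref{f17_6}), (\ref{f19_3})--(\ref{f19_5})) that are \emph{not} Kuwata curves. So a scan over the triples $(\lambda,\mu,\nu)\in\F_{13}^3$ can certify which Kuwata surfaces are full, but it cannot by itself certify that no full non-Kuwata surface exists over $\F_{13}$, which is what ``exactly two'' requires.

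The paper closes this gap by an entirely different mechanism: it appeals to Kaplan's independent, complete classification of all split degree two del Pezzo surfaces over $\F_{13}$ (Prop.\ 72 of his thesis), checks that the resulting isomorphism classes are precisely the two surfaces (\ref{f13_1}) and (\ref{f13_2}), and observes that both are full (indeed the Corollary records that over $\F_9,\F_{11},\F_{13}$ full and split coincide). Your fullness criterion $6h+2e=246+|Q(\F_{13})|$ from (\ref{full}) and your proposed invariants for distinguishing the two classes (the multiset of line types $(h_i,e_i,f_i,g_i,c_i)$, or $|Q(\F_{13})|$) are fine and match what the paper records; but to make the enumeration exhaustive you must either import Kaplan's classification as the paper does, or extend your search beyond the Kuwata family to all split quartic branch loci over $\F_{13}$ and prove that search complete.
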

The bitangents corresponding to equation (\ref{f13_1}) come in two intersection types.  There are 24 bitangents of type (1,11,2,0,0) and 4 of type (3,9,0,0,2). The quartic curve $x^4 + y^4 + z^4 + 8(x^2y^2 + x^2z^2 + y^2z^2)
$ contains eight $\F_{13}$-rational points.
\begin{center}
\includegraphics[scale= .5]{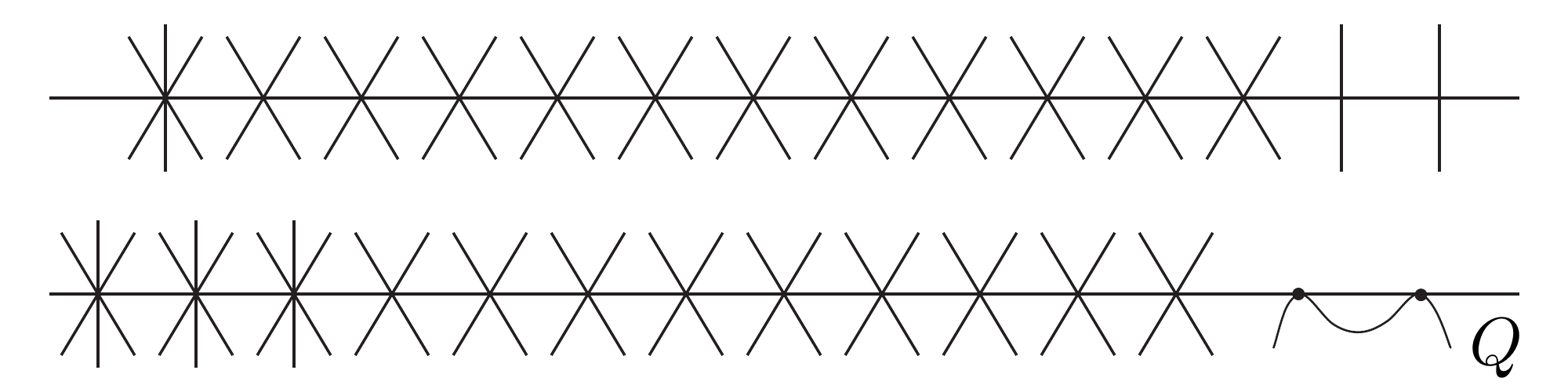}
\end{center}

The bitangents corresponding to equation (\ref{f13_2}) also come in two intersection types.  There are 24 bitangents of type (1,11,2,0,0) and 4 of type (1,12,0,0,1). The quartic curve $x^4 + y^4 + z^4 -x^2y^2$ contains four $\F_{13}$-rational points, all hyperflexes.

\begin{center}
\includegraphics[scale= .5]{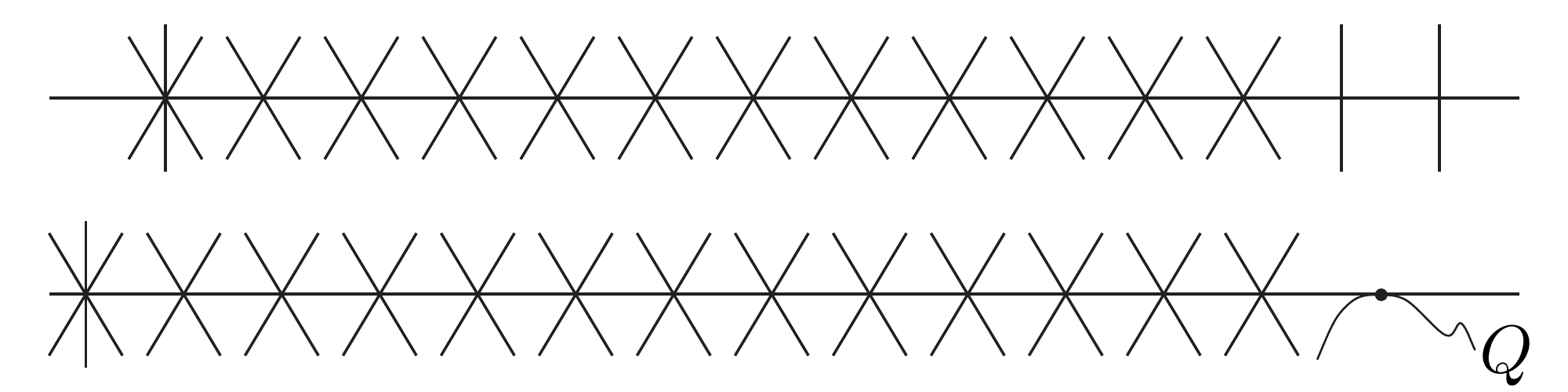}
\end{center}

We know that these are the only full degree two del Pezzos over the  fields $\F_9, \F_{11}, \F_{13}$ because Nathan Kaplan has found all the split degree two del Pezzo surfaces  over these fields \cite[Prop 70,71,72]{nathan}, and our results agree.  
\begin{corollary}  Over fields of size $9, 11,$ and $13$, a degree two del Pezzo surface is full if and only if it is split.
\end{corollary}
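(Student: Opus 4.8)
The plan is to combine the explicit classifications obtained above with Kaplan's independent enumeration of all split surfaces over these three fields, so that the two counts can be matched. One implication is free: by Hirschfeld's definition a full del Pezzo surface is by construction split, hence ``full $\Rightarrow$ split'' holds over every finite field. It remains to prove the converse over $\F_9$, $\F_{11}$, and $\F_{13}$.

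For the converse I would first record, from the three theorems of this section, that up to isomorphism there is exactly one full $X_2$ over $\F_9$, exactly one over $\F_{11}$, and exactly two over $\F_{13}$; and from Kaplan \cite[Prop.\ 70, 71, 72]{nathan} that, up to isomorphism, there are exactly one, one, and two \emph{split} degree two del Pezzo surfaces over these same fields. Since every full surface is split, the full surfaces over a given field form a subset of Kaplan's list of split surfaces; as the cardinalities coincide field by field ($1=1$, $1=1$, $2=2$), each such inclusion is an equality, and therefore every split surface over $\F_9$, $\F_{11}$, or $\F_{13}$ is full. In particular the two notions agree over these fields, which is the assertion.

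The point that actually needs care is the completeness of the full-surface classification feeding into this comparison: one must know that no full surface over these fields escapes the MAGMA search over Kuwata curves. This is supplied by the Lemma proved above, which shows that over $\F_9$, $\F_{11}$, and $\F_{13}$ every split degree two del Pezzo carries a generalized Eckardt point; together with the fact that for a split surface all twenty-eight bitangents of the branch quartic are already rational, this places the branch quartic, after a change of coordinates, among the Kuwata curves $C_{\lambda\mu\nu}$ whose bitangents we can read off directly. The agreement of our enumeration with Kaplan's a~priori count then serves as an external consistency check, and I expect verifying this completeness — rather than the final bookkeeping with the three numbers — to be the main obstacle.
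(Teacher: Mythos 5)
Your core argument is exactly the paper's: full implies split by definition, and the one, one, and two full surfaces exhibited over $\F_9$, $\F_{11}$, and $\F_{13}$ exhaust Kaplan's complete lists of one, one, and two split surfaces over those fields, so the inclusion of isomorphism classes is an equality. However, your third paragraph misidentifies where the weight of the argument lies. Completeness of the full-surface search is not actually needed: since every full surface is split and Kaplan's lists of split surfaces are complete, it suffices to exhibit $1$, $1$, and $2$ pairwise non-isomorphic full surfaces --- a lower bound --- and the inclusion $\{\text{full}\}\subseteq\{\text{split}\}$ then forces equality, with completeness of the full list falling out as a consequence rather than a hypothesis. Moreover, the route you sketch for establishing completeness (generalized Eckardt point, hence Kuwata curve) does not quite work as stated: a generalized Eckardt point gives one involution, whereas Kuwata's normal form $C_{\lambda\mu\nu}$ with all twenty-eight bitangents rational requires a \emph{pair of commuting} involutions, and even Kuwata's change of coordinates is only guaranteed over a field extension. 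The paper sidesteps all of this by leaning entirely on Kaplan's independent enumeration of split surfaces; what it (and you) must still take on faith is that enumeration together with the pairwise non-isomorphy of the listed surfaces.
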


\begin{theorem}
Over the field $\F_{17}$ there are exactly six full degree two del Pezzo surfaces up to isomorphism.  

\begin{align} \label{f17_1}&w^2=x^4 + y^4 + z^4,\\
\label{f17_2}   &w^2=x^4 + y^4 + z^4 -x^2y^2 +7 x^2z^2 + 7y^2z^2,\\
 \label{f17_3}  &w^2=x^4 + y^4 + 2z^4 + 13x^2y^2 +6 x^2z^2 + 6y^2z^2,\\
\label{f17_4} &w^2=x^4 + y^4 + 2z^4 + 13x^2y^2 +x^2z^2 +y^2z^2, \\
\label{f17_5} &w^2=x^4+y^4+2z^4+5x^3y+15x^3z+13x^2y^2+5x^2yz+13x^2z^2+ \nonumber \\ 
& \hskip 1cm 13xy^3+ 15xy^2z+5xyz^2+5xz^3+2y^3z+13y^2z^2+12yz^3,\\
\label{f17_6} &w^2= x^4+2y^4+z^4 + 2x^3y+6x^2y^2+16x^2yz+ 11x^2z^2+5xy^3+\nonumber \\ 
& \hskip 1cm 7xy^2z+3xyz^2+6xz^3+6y^3z+10y^2z^2+8yz^3.
\end{align}

\end{theorem}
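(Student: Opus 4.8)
I would prove this by a machine-assisted classification built on Kuwata's family $C_{\lambda\mu\nu}$, with a completeness part and an enumeration part. Write $q=17$, so that the fullness equation~(\ref{full}) becomes $6h+2e=174+|Q(\F_{17})|$, where $Q$ is the branch quartic of the double cover $X_2\to\P^2$ and $(h,e,f,g,c)$ records how its $28$ bitangents intersect one another. The plan has four steps: \textbf{(I)} show every full $X_2/\F_{17}$ is isomorphic over $\F_{17}$ to the surface $w^2=f_4$ whose branch quartic is a Kuwata curve $C_{\lambda\mu\nu}$ with $\lambda,\mu,\nu\in\F_{17}$; \textbf{(II)} enumerate all admissible triples $(\lambda,\mu,\nu)\in\F_{17}^3$; \textbf{(III)} for each, compute $(h,e,f,g,c)$ from the explicit bitangents and test~(\ref{full}); \textbf{(IV)} collapse the full ones into $\mathrm{PGL}_3(\F_{17})$-orbits and read off the representatives~(\ref{f17_1})--(\ref{f17_6}).

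For \textbf{(I)}: a full surface is split, so all $56$ exceptional curves, hence all $28$ bitangents of $Q$, are $\F_{17}$-rational. One wants next that a full $X_2/\F_{17}$ carries a generalized Eckardt point, in the spirit of the $\F_{13}$ argument above. If $h=0$, the per-line relations~(\ref{line}) and~(\ref{int}) force $e_i=9+g_i+c_i\in[9,13]$, hence $e\ge 87$, which together with $6h+2e=174+|Q(\F_{17})|$ and the Weil bound confines $e$ to $[87,108]$; unlike the $\F_{13}$ case no immediate divisibility contradiction appears, so closing this step needs either a finer estimate on the number of Eckardt points among $28$ bitangents or input from an exhaustive classification. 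Granting a generalized Eckardt point, the Salgado--Testa--V\'arilly-Alvarado normal form puts $X_2$ over $\F_{17}$ in the shape $w^2=x^4+q_2(y,z)x^2+q_4(y,z)$, and Kuwata's theorems together with the $\F_{17}$-rationality of all $28$ bitangents further normalize $Q$ to some $C_{\lambda\mu\nu}$ with $\lambda,\mu,\nu\in\F_{17}$; one then cross-checks the list against Kaplan's tabulation of split degree-two del Pezzo surfaces over $\F_{17}$.

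For \textbf{(II)}--\textbf{(III)}: loop over all $(\lambda,\mu,\nu)\in\F_{17}^3$ satisfying $(1-\lambda^2)(1-\mu^2)(1-\nu^2)(1-\mu^2\lambda^2)(1-\nu^2\lambda^2)(1-\mu^2\nu^2)(1-\lambda^2\mu^2\nu^2)\ne 0$; for each, write down the $28$ bitangents of $C_{\lambda\mu\nu}$ from Kuwata's list, and for each bitangent record, over its $18$ rational points, how many of the other $27$ bitangents pass through each point and how often the line meets $C_{\lambda\mu\nu}$ over $\F_{17}$ --- producing $(h_i,e_i,f_i,g_i,c_i)$, hence $(h,e,f,g,c)$ with $c=|Q(\F_{17})|$ --- then test whether $49+|Q(\F_{17})|=6h+2e-125$. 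This is a finite computation in MAGMA; the only care required is that the triple loop is genuinely exhaustive and that the incidences are computed projectively rather than affinely.

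For \textbf{(IV)} and the main obstacle: $w^2=f_4$ and $w^2=f_4'$ are isomorphic over $\F_{17}$ exactly when the branch quartics are $\mathrm{PGL}_3(\F_{17})$-equivalent (the Geiser involution absorbs $w\mapsto-w$), so I would partition the full Kuwata curves into $\mathrm{PGL}_3(\F_{17})$-orbits; they fall into six classes, with~(\ref{f17_1})--(\ref{f17_6}) as representatives --- note that~(\ref{f17_5}) and~(\ref{f17_6}) are $\mathrm{PGL}_3$-images of diagonal $C_{\lambda\mu\nu}$'s, not Kuwata curves in the literal coordinates of the theorem. The genuinely delicate point is \textbf{(I)}: turning ``full over $\F_{17}$'' into ``carries a generalized Eckardt point'' by elementary counting (which does not close as painlessly as the $\F_{13}$ case), and then certifying that the reduction to a Kuwata curve descends to $\F_{17}$ itself and not merely to an extension. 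Everything downstream of \textbf{(I)} is bookkeeping and a MAGMA run; in the absence of the counting argument, completeness must be inherited from an exhaustive classification of split degree-two del Pezzo surfaces over $\F_{17}$.
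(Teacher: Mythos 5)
There is a genuine gap, and it sits exactly where you flag the ``delicate point'': your primary route (I)--(IV) reduces the classification to Kuwata curves $C_{\lambda\mu\nu}$, but that reduction is false over $\F_{17}$. The implication chain ``full $\Rightarrow$ generalized Eckardt point $\Rightarrow$ Salgado--Testa--V\'arilly-Alvarado form $\Rightarrow$ Kuwata curve'' breaks at the last step: the normal form $w^2=x^4+q_2(y,z)x^2+q_4(y,z)$ produces a single involution $x\mapsto -x$, whereas Kuwata's classification requires a \emph{pair of commuting} involutions. Indeed, the surfaces (\ref{f17_5}) and (\ref{f17_6}) each carry generalized Eckardt points (their bitangent types include $(1,9,6,0,2)$, so $h_i\geq 1$ on some lines) and are full, yet they are \emph{not} isomorphic to any Kuwata curve --- contrary to your parenthetical claim in step (IV) that they are $\mathrm{PGL}_3$-images of diagonal $C_{\lambda\mu\nu}$'s. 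Consequently the exhaustive loop over $(\lambda,\mu,\nu)\in\F_{17}^3$ in steps (II)--(III) would find only four of the six full surfaces, and your main argument would terminate with the wrong count.

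The completeness in fact rests entirely on the fallback you mention only in passing: Kaplan's exhaustive classification of split degree-two del Pezzo surfaces over $\F_{17}$ into seven isomorphism classes. One then tests each of his seven representatives for fullness via the intersection data $(h,e,f,g,c)$ and equation (\ref{full}): four of his classes are Kuwata curves and full, one is the Kuwata curve $C_{234}$ and is not full, and the remaining two are not Kuwata curves but are full --- giving exactly six. So the Kuwata machinery serves as a search tool that produces (\ref{f17_1})--(\ref{f17_4}), not as the completeness argument; you would need to restructure the proof so that Kaplan's list, rather than the Kuwata family, is the universe being enumerated. (Two smaller points: your $h=0$ estimate should give $e\geq \tfrac{28\cdot 9}{3}=84$, not $87$; and, as you suspect, no clean divisibility contradiction is available here as it was over $\F_{13}$, which is precisely why the external classification is unavoidable.)
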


In  \cite[Prop 73]{nathan}, Kaplan gives seven isomorphism types for split degree two del Pezzo surfaces defined over $\F_{17}$.  His first four equations correspond to the first four full $X_2$'s we list above that come from Kuwata's equations.  His sixth equation is isomorphic to the Kuwata curve $C_{234}$ which is not full.  Thus, once the field has at least 17 elements, there is enough room for a degree 2 del Pezzo surface to be split but not full.  His fifth and seventh equations do not correspond to Kuwata curves, but are indeed full and are isomorphic to  the last two equations in the theorem.

The bitangents corresponding to equation (\ref{f17_1})  come in two intersection types.  
There are 12 bitangents of type (1,8,8,0,1), 16 of the type (3,3,12,0,0), and  $|Q(\F_{17})|=12$.
For equation (\ref{f17_2}) there are 12 bitangents of type (1,9,6,0,2), 12 of the type (1,7,10,0,0),  4 of type (3,6,6,3,0), and $|Q(\F_{17})|=24$.
For equation (\ref{f17_3}) there are 12 bitangents of type (1,9,6,0,2), 12 of the type (1,8,8,1,0),  4 of type (3,3,12,0,0), and $|Q(\F_{17})|=24$.
For equation (\ref{f17_4}) there are 8 bitangents of type (0,9,9,0,0), 8 of the type (1,8,8,1,0), 8 of type (1,7,10,0,0),  4 of type (1,9,6,0,2), and $|Q(\F_{17})|=8$.
 For equation (\ref{f17_5}) there are 4 bitangents of type (0,9,9,0,0), 12 of the type (0,10,7,1,0), 6 of type (1,7,10,0,0),  6 of type (1,9,6,0,2), and $|Q(\F_{17})|=12$.
 For equation (\ref{f17_6}) there is 1 bitangent of type (1,9,6,0,2), 2 of type (1,9,6,2,0), 8 of type (0,9,9,0,0), 10 of the type (0,10,7,1,0), 1 of type (1,7,10,0,0),  2 of type (0,10,7,0,1), 4 of type (0, 11, 5, 0, 2), and  $|Q(\F_{17})|=12$.

\begin{theorem}
Over the field $\F_{19}$ there exist  five full degree two del Pezzo surfaces up to isomorphism.  

\begin{align} \label{f19_1}& w^2=x^4 + y^4 + z^4+  4x^2y^2 +4x^2z^2+5 y^2z^2, \\
\label{f19_2} &w^2=x^4 + y^4 + z^4  -x^2y^2 +7 x^2z^2 + 7y^2z^2,\\
\label{f19_3} &w^2= x^4+y^4+z^4+12x^3y+2x^3z+15x^2y^2+14x^2yz+11x^2z^2+\nonumber\\
& \hskip .6cm 7xy^3+10xy^2z+5xyz^2+10xz^3+2y^3z+13y^2z^2+8yz^3,\\
\label{f19_4} &w^2= x^4+y^4+z^4+11x^3y+14x^3z+18x^2y^2+10x^2yz+9x^2z^2+\nonumber 
 \\ & \hskip .6cm 11xy^3 +16xy^2 z+18xyz^2+5xz^3+3y^3z+10y^2z^2+2yz^3, \\
\label{f19_5} &w^2=  x^4+y^4-z^4+x^3y+9x^3z+7x^2y^2+16x^2yz+x^2z^2+\nonumber  \\ 
& \hskip .6cm xy^3+7xy^2z+5xyz^2+6y^3z+11y^2z^2.
\end{align}
\end{theorem}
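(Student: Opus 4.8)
The plan is to reduce the statement to a finite machine computation, exactly as was done above for $\F_{17}$. By the bound $9\le q\le 37$ established in Section~\ref{knecht:seclabel2}, $q=19$ lies in the admissible range, so nothing is excluded a priori. Since a full surface is by definition split, every full $X_2$ over $\F_{19}$ occurs among the finitely many isomorphism classes of split degree two del Pezzo surfaces over $\F_{19}$ classified by Kaplan \cite{nathan} (the $\F_{19}$ counterpart of \cite[Prop.~73]{nathan}). So it suffices to run through Kaplan's list of representatives $w^2=f_4(x,y,z)$, keep those that are full, and record what survives.

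For each representative the branch curve $Q\colon f_4=0$ is a smooth plane quartic, and since $X_2$ is split all $28$ of its bitangents are $\F_{19}$-rational. MAGMA finds them directly --- for instance by checking, for each of the $381$ lines $L$ of $\P^2(\F_{19})$, whether $L\cap Q=2D$ for some effective divisor $D$ of degree two --- and then, for every point $P\in\P^2(\F_{19})$ lying on at least one bitangent, one records how many bitangents pass through $P$. Tabulating over all such points yields the integers $h_i,e_i,f_i,g_i$ attached to each bitangent $L_i$, in the notation of Section~\ref{knecht:seclabel2}, hence the aggregates $h=\tfrac{1}{4}\sum h_i$ and $e=\tfrac{1}{3}\sum e_i$, while $|Q(\F_{19})|$ is computed directly. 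The surface is full precisely when equation~(\ref{full}) holds, which for $q=19$ reads $6h+2e=150+|Q(\F_{19})|$. Applying this single numerical test to Kaplan's list isolates exactly the surfaces~(\ref{f19_1})--(\ref{f19_5}); they are pairwise non-isomorphic because they are distinct entries of Kaplan's already-reduced classification. As an independent confirmation of the first two, I would loop over all triples $(\lambda,\mu,\nu)\in\F_{19}^3$ satisfying Kuwata's nonvanishing hypothesis, form the curves $C_{\lambda\mu\nu}$, and --- using Kuwata's explicit formulas for the $28$ double tangents, which removes the need to recompute bitangents --- evaluate $h$, $e$ and $|Q(\F_{19})|$ to extract the full members of this family; these recover~(\ref{f19_1}) and~(\ref{f19_2}) up to isomorphism, whereas~(\ref{f19_3})--(\ref{f19_5}) admit no Kuwata normal form over $\F_{19}$ and hence, as with the fifth and seventh surfaces over $\F_{17}$, lie outside the Kuwata family.

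I expect the delicate point to be \emph{completeness} rather than any single hard calculation: one leans on Kaplan's enumeration of split degree two del Pezzo surfaces over $\F_{19}$ being exhaustive and faithfully transcribed, and on the incidence computation being trustworthy --- in particular one must confirm that each candidate $Q$ genuinely has $28$ distinct $\F_{19}$-rational bitangents and that all point counts are taken over $\F_{19}$ and not inadvertently over an extension or over $\overline{\F}_{19}$. A secondary subtlety is bookkeeping on the Kuwata side: distinct triples $(\lambda,\mu,\nu)$ can produce $\F_{19}$-isomorphic curves, so the comparison with Kaplan's list should be made at the level of the multiset of intersection types $\{(h_i,e_i,f_i,g_i,c_i)\}_{i=1}^{28}$, which is an isomorphism invariant, rather than at the level of defining equations. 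Once these are handled, the theorem is read off the MAGMA output.
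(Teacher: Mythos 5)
Your proposal is correct and matches the paper's method: the authors likewise test each surface for fullness via the incidence data $(h_i,e_i,f_i,g_i,c_i)$ and equation~(\ref{full}), find the two Kuwata examples~(\ref{f19_1})--(\ref{f19_2}) by searching the $C_{\lambda\mu\nu}$ family in MAGMA, and obtain completeness by checking all fourteen isomorphism classes in Kaplan's enumeration (which is \cite[Prop.~75]{nathan}, not Prop.~73), whose third, fourth, and seventh non-Kuwata curves give~(\ref{f19_3})--(\ref{f19_5}). The only substantive caveat you raise --- reliance on the exhaustiveness of Kaplan's list --- is exactly the dependence the paper itself has.
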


In  \cite[Prop 75]{nathan}, Kaplan gives fourteen isomorphism types for split degree two del Pezzo surfaces defined over $\F_{19}$.  His $8^{th}$ and $12^{th}$ equations correspond to the equations  (\ref{f19_1}) and  (\ref{f19_2})  in the list above.  His $5^{th}, 6^{th}, 10^{th}, 13^{th}$, and $14^{th}$ equations are isomorphic to the five other Kuwata curves and are not full.   Kaplan's $1^{st}$, $2^{nd}$, $9^{th}$ and $11^{th}$  equations are not Kuwata curves and are not full.  But his $3^{rd}$, $4^{th}$, and $7^{th}$ curves define full del Pezzos that are not Kuwata and are isomorphic to the last three equations in the theorem above.

The bitangents corresponding to the quartic in equation (\ref{f19_1}) come in five different types.  
There are 8 bitangents of type (1,7,10,2,0), 4 of type (1,7,10,0,2),   8 of the type (0,7,13,0,0), 4 of type (1,8,8,1,2), 4 of type (1,8,8,3,0), and $|Q(\F_{19})|=16$.
For equation (\ref{f19_2}) there are 12 bitangents of type (1,6,12,1,0),  12 of type (1,5,14,0,0),  4 of the type (3,6,6,3,2), and $|Q(\F_{19})|=8$.
For equation (\ref{f19_3}) there are 2 bitangents of type (0,10,7,3,0), 2 of type (0,10,7,1,2), 8 of type (0,9,9,2,0), 4 of type (0,9,9,0,2), 2 of type (0,9,9,1,1), 6 of type (0,8,11,1,0), 2 of type (1,6,12,1,0), 1 of type (1,9,6,2,2), 1 of type (1,5,14,0,0), and $|Q(\F_{19})|=16$.
For equation (\ref{f19_4}) there are 2 bitangents of type (0,10,7,2,1), 4 of type (0,10,7,1,2), 5 of type (0,7,13,0,0), 5 of type (0,9,9,2,0), 8 of type (0,8,11,1,0), 1 of type (1,7,10,0,2), 1 of type (1,9,6,4,0), 2 of type (1,7,10,2,0), and $|Q(\F_{19})|=12$.
For equation (\ref{f19_5}) there are 6 bitangents of type  (0,7,13,0,0), 6 of type (0,8,11,1,0), 6 of type (0,9,9,2,0),  6 of type (1,7,10,2,0), 3 of type (1,7,10,0,2), 1 of type (3,3,12,0,2), and $|Q(\F_{19})|=8$.

\begin{theorem}
Over the field $\F_{23}$ there exist at least two full degree two del Pezzo surfaces up to isomorphism.  

\begin{equation} \label{f23_1}w^2=x^4 + y^4 + z^4+  6(x^2y^2 +x^2z^2+ y^2z^2) \end{equation}
and
 \begin{equation} \label{f23_2}w^2=x^4 + y^4 + z^4 - (x^2y^2 + x^2z^2 + y^2z^2). \end{equation}
\end{theorem}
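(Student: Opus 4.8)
The plan is to run, for each of the two equations, the apparatus of Section~2: exhibit the branch quartic as (equivalent over $\F_{23}$ to) a Kuwata curve so that its $28$ bitangents — hence all $56$ exceptional curves — are $\F_{23}$-rational, compute the intersection data $(h_i,e_i,f_i,g_i,c_i)$ of the bitangents over $\F_{23}$, verify the fullness identity \eqref{full}, and finally separate the two surfaces by a projective invariant of the quartics. Write $f_4$ for the quartic form on the right of each equation and $Q\subset\P^2$ for its zero locus. One first checks that $Q$ is a smooth plane quartic over $\F_{23}$ (the partials of $f_4$ have no common zero over $\AF$, equivalently the quartic discriminant is a unit); since $23\neq 2$, $w^2=f_4$ is then a smooth degree-two del Pezzo surface $X_2$ over $\F_{23}$ whose anticanonical map $\varphi\colon X_2\to\P^2$ is a double cover branched along $Q$.

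The quartic in \eqref{f23_1} is, up to the nonzero scalar $6$, exactly the Kuwata curve $C_{\lambda\mu\nu}$ with $\lambda=\mu=\nu=11$: one has $11^2\equiv 6\pmod{23}$, and substituting $\lambda^2=\mu^2=\nu^2=6$ into Kuwata's equation and factoring out the invertible common factor $(1-6)^2$ reproduces $3\bigl(x^4+y^4+z^4+6(x^2y^2+y^2z^2+z^2x^2)\bigr)$, while the nondegeneracy product $(1-6)(1-6^2)(1-6^3)$ is a unit. Hence the $28$ bitangents of this $Q$ are precisely the linear forms in Kuwata's theorem with $\lambda=\mu=\nu=11$, all over $\F_{23}$, so $X_2$ is split. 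For \eqref{f23_2} one argues similarly, but note that this quartic, although symmetric, is \emph{not} a Kuwata curve with $\lambda=\mu=\nu$: the symmetric Kuwata family forces the cross-term coefficient to equal $-2(1+s^2)/(1+s)^2$ with $s=\lambda^2\neq1$, and this never equals $-1$. Instead one matches it to a Kuwata curve $C_{\lambda\mu\nu}$ (with $\lambda^2,\mu^2,\nu^2$ not all equal) after a diagonal change of coordinates $(x,y,z)\mapsto(\alpha x,\beta y,\gamma z)$ over $\F_{23}$ — the matching reduces to solving the three ``square conditions'' (coefficient of $x^2y^2$)$^2=(\text{coeff of }x^4)(\text{coeff of }y^4)$ etc.\ for $(\lambda,\mu,\nu)$ and checking that the relevant ratios are squares so that $\alpha,\beta,\gamma$ exist in $\F_{23}$ — or, more simply, one factors the $28$ bitangent forms of $Q$ directly in MAGMA and observes that they all split over $\F_{23}$. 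Either way $X_2$ is split.

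With the bitangents in hand, for each bitangent $L_i$ one runs through its $24$ rational points, recording at each point how many of the other $27$ bitangents pass through it and whether the point lies on $Q$; this produces the vectors $(h_i,e_i,f_i,g_i,c_i)$ subject to \eqref{line} and \eqref{int}. Set $h=\tfrac14\sum h_i$, $e=\tfrac13\sum e_i$, $c=\sum c_i$; one first verifies the standing necessary condition $c=|Q(\F_{23})|$ (every $\F_{23}$-point of $Q$ is a bitangency point — without this \eqref{full} is not valid and $X_2$ is not full), and then checks \eqref{full} itself, which for $q=23$ is the numerical identity $6h+2e=126+|Q(\F_{23})|$. Substituting the computed $h$, $e$ and the point count $|Q(\F_{23})|$ confirms that each of the two surfaces is full; the theorem claims only ``at least two'' because the Kuwata-curve search over $\F_{23}$ is not asserted to be exhaustive. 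For non-isomorphism, recall that in odd characteristic any isomorphism of degree-two del Pezzo surfaces descends through the anticanonical map to an element of $\mathrm{PGL}_3(\F_{23})$ carrying one branch quartic onto the other, and conversely a $\mathrm{PGL}_3(\F_{23})$-equivalence of the quartics lifts back to an isomorphism of the surfaces \emph{once both are split} (the quadratic-twist ambiguity $w^2=g\leftrightarrow w^2=cg$ is killed, since splitness of both forces $c$ to be a square). So it suffices to exhibit a $\mathrm{PGL}_3(\F_{23})$-invariant separating the two quartics — the point counts $|Q(\F_{23})|$, the number $2h$ of generalized Eckardt points, or the multiset of intersection types $\{(h_i,e_i,f_i,g_i,c_i)\}_{i=1}^{28}$, all produced in the previous step — and, as in the $\F_9$, $\F_{13}$, $\F_{17}$, $\F_{19}$ computations, at least one of these differs between the two surfaces. (If one prefers to avoid invariants, a direct MAGMA search over $\mathrm{PGL}_3(\F_{23})$ certifies that no equivalence of the two quartics exists.)

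I expect the one step with genuine friction to be establishing splitness for \eqref{f23_2}: since that quartic is not literally one of Kuwata's symmetric curves, one must either locate the coordinate change putting it into Kuwata form (solving the square conditions over $\F_{23}$) or verify directly that its $28$ bitangents split over $\F_{23}$. After that, the fullness check through \eqref{full} and the separation of the two surfaces are routine finite verifications over $\F_{23}$. One should also be careful that \eqref{full} is the correct fullness criterion only under the hypothesis $c=|Q(\F_{23})|$, so this condition must be checked rather than assumed.
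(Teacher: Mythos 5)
Your proposal follows essentially the same route as the paper: realize each branch quartic as a Kuwata curve so that all $28$ bitangents (hence all $56$ exceptional curves) are $\F_{23}$-rational, compute the intersection data $(h_i,e_i,f_i,g_i,c_i)$ in MAGMA, and verify the fullness criterion \eqref{full}, which for $q=23$ reads $6h+2e=126+|Q(\F_{23})|$ and is confirmed by the paper's tabulated bitangent types. The paper offers only that computational summary as justification, so your extra care about the non-symmetric Kuwata normalization of \eqref{f23_2}, the standing condition $c=|Q(\F_{23})|$, and the separation of the two isomorphism classes is consistent with, and slightly more explicit than, what the paper records.
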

The bitangents corresponding to the quartic in equation (\ref{f23_1}) come in three different types.  
There are 12 bitangents of type (1,16,12,3,2), 12 of type (1,5,14,4,0), 4 of type (3,3,12,6,0), and $|Q(\F_{23})|=24$.
For equation (\ref{f23_2}) all 28 bitangents are of type (3,0,18,3,0)  and $Q(\F_{23})=\emptyset$.

There are twelve other isomorphism classes of Kuwata curves that correspond to split but not full degree two del Pezzo surfaces over $\F_{23}$.  In \cite[Prop 76]{nathan}, Kaplan says that there are at least 19 isomorphism classes of split degree two del Pezzo surfaces over $\F_{23}$.  He gives two classes that are not isomorphic to Kuwata curves because they have no non-trivial automorphisms.  Thus, Kaplan has found two split surfaces over $\F_{23}$ that contain no generalized Eckardt points.  His surfaces are not full.  Since Kaplan only gives the equations for two classes, and we know the equations for 14 Kuwata classes, there are still at least three isomorphism classes of split $X_2$'s to be found that could possibly be full.

Once the field size is larger than $23$, none of the del Pezzo surfaces corresponding to Kuwata curves are full.  Thus, a full $X_2$ defined over $\fq$ with $25\leq q \leq 37$ cannot have two commuting involutions in its automorphism group.  So we are left with the open question:
\begin{question}
Are there full degree two del Pezzo surfaces defined over $\F_{25}, \F_{27}, \F_{29}, \F_{31}$ and $\F_{37}$?
\end{question}
Another question we have yet to answer involves finding full degree two del Pezzos that do not come from Kuwata curves:
\begin{question}
Are there other full degree two del Pezzo surfaces defined over $\F_{23}$?
\end{question}
Once these questions are answered there remains the  harder open problem involving  smooth sextic hypersurfaces in the weighted projective space $\P(3,2,1,1)$:
\begin{question}
What are the full degree one del Pezzo surfaces?
\end{question}
This question is extremely computationally heavy because there are 240 exceptional curves to consider and $q^2+9q+1$ points on the surface.

\end{document}